\def\name{Crossed products and MF algebras (W. Li, S. Orfanos)}
\theoremstyle{plain}
\newtheorem{theorem}{Theorem}
\newtheorem{lemma}{Lemma}
\newtheorem{claim}{Claim}
\newtheorem{proposition}{Proposition}
\theoremstyle{definition}
\newtheorem*{remark}{Remark}
\newtheorem{definition}{Definition}
\newtheorem{example}{Example}
\begin{document}
\title {Crossed products and MF algebras}
\author {Weihua Li, Stefanos Orfanos}
\maketitle

\begin{abstract}
We prove that the crossed product $\mathcal A\rtimes_{\alpha}G$ of a unital finitely generated MF algebra $\mathcal{A}$ by a discrete finitely generated amenable residually finite group $G$ is an MF algebra, provided that the action $\alpha$ is almost periodic. This generalizes a result of Hadwin and Shen. We also construct two examples of crossed product $C^\ast$-algebras whose BDF $Ext$ semigroups are not groups.
\end{abstract}

\vspace{.2 in}

\noindent\textbf {Keywords:} MF algebras; crossed products; BDF $Ext$ semigroups; amenable groups; residually finite groups. \\

\noindent\textbf {2000 Mathematics Subject Classification:} 46L05

\section*{Introduction}
The purpose of this note is to generalize two recent results concerning crossed products. The first is:
\begin{theorem}[Hadwin--Shen \cite{4}] Suppose that $\mathcal{A}$ is a finitely generated unital MF algebra and $\alpha$ is a homomorphism from $\mathbb{Z}$ into $Aut(\mathcal{A})$ such that there is a sequence of integers $0\le n_1< n_2<\dotsm $ satisfying \[\lim_{j\to \infty} \|\alpha (n_j)a-a\| =0\] for any $a\in \mathcal{A}$. Then $\mathcal{A}\rtimes_{\alpha} \mathbb{Z}$ is an MF algebra.
\end{theorem}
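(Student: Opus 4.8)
The plan is to exhibit a unital embedding of $\mathcal A\rtimes_\alpha\mathbb Z$ into the generalized Calkin-type quotient $\mathcal Q=\prod_j M_{K_j}\big/\bigoplus_j M_{K_j}$, which is exactly what it means for the separable algebra $\mathcal A\rtimes_\alpha\mathbb Z$ to be MF. Since $\mathbb Z$ is amenable, the full and reduced crossed products coincide, so a covariant pair will automatically integrate to a $*$-homomorphism on $\mathcal A\rtimes_\alpha\mathbb Z$, and the canonical conditional expectation $E\colon\mathcal A\rtimes_\alpha\mathbb Z\to\mathcal A$, $E(\sum_g a_g u^g)=a_0$, is faithful; I will use both facts.

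First I would set up the finite-dimensional model. Fix asymptotically multiplicative, $*$-preserving, asymptotically isometric microstate maps $\phi_n\colon\mathcal A\to M_{k_n}$ witnessing that $\mathcal A$ is MF. For each $j$ put $N=n_j$ and, on $\mathbb C^{N}\otimes\mathbb C^{k_n}$ with indices in $\mathbb Z/N$, define a block-diagonal $*$-map and a unitary
\[
\pi_j(a)=\bigoplus_{l=0}^{N-1}\phi_{n}\big(\alpha(l)(a)\big),\qquad U_j=S_N\otimes 1,
\]
where $S_N$ is the cyclic shift $e_l\mapsto e_{l-1}$ and $n=n(j)$ is a microstate index to be chosen. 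Set $\psi_j\big(\sum_g a_g u^g\big)=\sum_g\pi_j(a_g)U_j^{\,g}$. A direct computation gives $U_j\pi_j(a)U_j^*=\pi_j(\alpha(1)(a))$ on every block except the wrap-around block $l=N-1$, where the two sides differ by $\phi_n(a)-\phi_n(\alpha(n_j)(a))$.

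The heart of the argument is to promote $(\psi_j)_j$ to an honest $*$-homomorphism $\Phi\colon\mathcal A\rtimes_\alpha\mathbb Z\to\mathcal Q$, for which every defining relation must hold in the limit $j\to\infty$, and here two points must be handled. The wrap-around error above has norm $\approx\|\alpha(n_j)(a)-a\|$, which tends to $0$ precisely by the almost-periodicity hypothesis; this is the one place the hypothesis is used, and it is what makes $\Phi(u)$ a unitary satisfying the covariance relation $\Phi(u)\Phi(a)\Phi(u)^*=\Phi(\alpha(1)(a))$. The second, more technical, point is that $\pi_j$ restricted to block $l$ is $\phi_n\circ\alpha(l)$, so I need asymptotic multiplicativity and isometry \emph{uniformly} over the growing range $0\le l<n_j$; since $\|\alpha(l)(p)\|=\|p\|$ for every automorphism, for each fixed $j$ this is only finitely many conditions, so a diagonal choice of $n(j)\to\infty$ (exhausting a countable dense set of $*$-polynomials with tolerances tending to $0$) makes $\Phi|_{\mathcal A}$ an isometric $*$-homomorphism. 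I expect this uniformity over the $n_j$ blocks, rather than the relation-checking, to be the main obstacle. By the universal property of the (full $=$ reduced) crossed product, the covariant pair $(\Phi|_{\mathcal A},\Phi(u))$ then integrates to the desired $\Phi$.

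Finally I would prove $\Phi$ is faithful, hence isometric, via a gauge-averaging conditional expectation. On $M_{K_j}$ let $\mathbb E_j(y)=\tfrac1N\sum_{s=0}^{N-1}V_s y V_s^*$ with $V_s=\operatorname{diag}(\omega^{ls})_l\otimes 1$ and $\omega=e^{2\pi i/N}$; this is the conditional expectation onto the block-diagonal fixed points of the dual $\mathbb Z/N$-action, and since $V_sU_j^{\,g}V_s^*=\omega^{-gs}U_j^{\,g}$ and $\pi_j(a)$ commutes with $V_s$, one checks $\mathbb E_j\big(\pi_j(a)U_j^{\,g}\big)=\delta_{g,0}\,\pi_j(a)$ for $|g|<N$. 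Passing to $\mathcal Q$, the contractive maps $(\mathbb E_j)_j$ descend to an expectation $\mathbb E$ satisfying $\mathbb E\circ\Phi=\Phi\circ E$ and $\mathbb E\big(\Phi(\mathcal A\rtimes_\alpha\mathbb Z)\big)\subseteq\Phi(\mathcal A)$. Then if $\Phi(x)=0$ we get $\Phi\big(E(x^*x)\big)=\mathbb E\big(\Phi(x)^*\Phi(x)\big)=0$, and since $\Phi|_{\mathcal A}$ is injective and $E$ is faithful this forces $x^*x=0$, hence $x=0$. Thus $\Phi$ is an isometric unital embedding of $\mathcal A\rtimes_\alpha\mathbb Z$ into $\mathcal Q$, so the crossed product is MF.
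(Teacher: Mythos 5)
Your proposal is correct, and its decisive step runs along a genuinely different track than the paper's. (A preliminary remark: the paper only cites this theorem from \cite{4}; its own argument appears as the proof of Theorem~8, of which this statement is the case $G=\mathbb{Z}$, $F_n\subset K_n=\{0,1,\dotsc,n_j-1\}$, $L_n=n_j\mathbb{Z}$, so that is the natural comparison.) Your finite-dimensional model is essentially the paper's: your blocks $\phi_{n(j)}(\alpha(l)a)$ are its $A_i^{(s)}=\sum_{y\in K_n}Q_n\left[\alpha(y^{-1}s^{-1})a_i\right]_nQ_n\otimes P_{yL_n}$, and your cyclic shift $S_N\otimes 1$ is an exact-unitary stand-in for its $U_s=Q_n\otimes P_n\lambda(s)P_n$ — you get an exact unitary with approximate covariance (the wrap-around error $\|\phi_n(a)-\phi_n(\alpha(n_j)a)\|$, which is where almost periodicity enters, exactly as in Claim~3), while the paper has an only approximately unitary $U_s$ (Lemma~3) with the covariance defect isolated in Claim~3; also, your diagonal choice of $n(j)$ over a countable $\alpha$-invariant family of $\ast$-polynomials is the right repair for the fact that MF supplies only asymptotically multiplicative, asymptotically isometric lifts, and you correctly identify that uniformity over the $n_j$ blocks as the main technical burden. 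Where you truly diverge is injectivity. The paper never proves its $\ast$-homomorphism $\rho$ into the quotient $\mathcal{C}$ faithful; instead it verifies criterion (ii) of Proposition~1, using $\rho$ only for the lower bound $\| f_j(a_1,\dotsc,\lambda(s_k))\|_{\mathcal{A}\rtimes_\alpha G}\ge \limsup_t \|f_j(A_1,\dotsc,U_{s_k})\|$, and it obtains the reverse inequality up to $\epsilon$ by rewriting each $f_j$ through the covariance relation as $\sum_d p_j^{(d)}(\lambda(s_1),\dotsc)q_j^{(d)}(\cdot)$ and comparing factor by factor via Claims~1--3 (the $\epsilon/3M$ bookkeeping). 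You instead prove faithfulness outright: the dual $\mathbb{Z}/N$-averaging $\mathbb{E}_j=\frac1N\sum_s V_s(\cdot)V_s^{\ast}$ descends to the quotient and intertwines $\Phi$ with the canonical expectation $E$, so injectivity of $\Phi|_{\mathcal{A}}$ plus faithfulness of $E$ (legitimate, since full $=$ reduced by amenability) kills the kernel — a computation that checks out, since $V_sU_j^{\,g}V_s^{\ast}=\omega^{-gs}U_j^{\,g}$ and $\pi_j(a)$ is block diagonal. Your route buys a shorter, conceptually cleaner endgame with no polynomial norm estimates at all; what the paper's estimate-based route buys is generality: it transfers verbatim to any finitely generated amenable residually finite $G$ with tilings $G=K_nL_n$, whereas your averaging argument uses the characters of $\mathbb{Z}/n_j$ and would need to be replaced (say, by the compression onto the block diagonal associated with the finite quotient $G/L_n$) before it covers Theorem~8.
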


and the second is:
\begin{theorem}[Orfanos \cite{5}] Let $\mathcal{A}$ be a separable unital quasidiagonal algebra and $G$ a discrete countable amenable residually finite group with a sequence of F\o lner sets $F_n$ and tilings of the form $G=K_nL_n$. Assume $\alpha : G\to Aut (\mathcal{A})$ is a homomorphism such that 
\[\max_{l\in L_n\cap K_nK_n^{-1}F_n} \|\alpha (l)a - a\| \to 0 \mbox{ as } n\to \infty\] for any $a\in \mathcal{A}$. Then $\mathcal{A}\rtimes_{\alpha} G$ is also quasidiagonal.
\end{theorem}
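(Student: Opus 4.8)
The plan is to verify quasidiagonality through Voiculescu's characterization: it suffices to produce a sequence of unital completely positive maps $\Phi_n \colon \mathcal{A}\rtimes_{\alpha} G \to M_{d_n}(\mathbb{C})$ that is asymptotically multiplicative and asymptotically isometric on the dense $*$-subalgebra of finite sums $\sum_g a_g u_g$. Since $G$ is amenable, the reduced and full crossed products coincide, so I may work with the induced (covariant) representation $\tilde\pi$ of $\mathcal{A}\rtimes_{\alpha} G$ on $H_0\otimes \ell^2(G)$ built from a faithful representation $\pi$ of $\mathcal{A}$, namely $\tilde\pi(a)(\xi\otimes\delta_g)=\pi(\alpha(g^{-1})a)\xi\otimes\delta_g$ and $\tilde\pi(u_s)(\xi\otimes\delta_g)=\xi\otimes\delta_{sg}$. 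This representation is faithful, hence isometric, and it satisfies the covariance relation $\tilde\pi(u_s)\tilde\pi(a)\tilde\pi(u_s)^*=\tilde\pi(\alpha(s)a)$; the finite region of $G$ carved out by $F_n$ is the corner of $\tilde\pi$ that I will imitate in finite dimensions.

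The finite-dimensional model replaces $\pi$ by matrix approximations from the quasidiagonality of $\mathcal{A}$ and replaces $\ell^2(G)$ by $\ell^2(F_n)$. For each $n$ I use the tiling $G=K_nL_n$ to partition, up to a Følner-small boundary, the set $F_n$ into cells $K_n\ell$ with $\ell\in L_n\cap K_n^{-1}F_n$. On a single cell the twisted element $\alpha(g^{-1})a$ with $g=k\ell$ equals $\alpha(\ell^{-1})\bigl(\alpha(k^{-1})a\bigr)$, and the hypothesis $\max_{\ell\in L_n\cap K_nK_n^{-1}F_n}\|\alpha(\ell)a-a\|\to 0$, applied with $a$ replaced by $\alpha(k^{-1})a$, forces this to lie within $o(1)$ of $\alpha(k^{-1})a$. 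Thus the family $\{\alpha(g^{-1})a : g\in F_n\}$ collapses, up to vanishing error, to the single finite set $\{\alpha(k^{-1})a : k\in K_n\}$, which one approximation can handle. Using quasidiagonality of $\mathcal{A}$, I then choose unital c.p.\ maps $\psi_n\colon \mathcal{A}\to M_{k_n}(\mathbb{C})$ that are multiplicative and isometric to within $\varepsilon_n\to 0$ on $\{\alpha(k^{-1})s : k\in K_n,\ s\in S_n\}$, where $S_n$ exhausts a generating set, and I let $\Phi_n$ be the compression to $M_{k_n}\otimes B(\ell^2(F_n))$ of the induced representation built from $\psi_n$, so that $\Phi_n(a)\approx\sum_{g\in F_n}\psi_n(\alpha(g^{-1})a)\otimes e_{gg}$ and $\Phi_n(u_s)=1\otimes P_{F_n}\lambda(s)P_{F_n}$. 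By construction each $\Phi_n$ is u.c.p.

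It remains to check the two asymptotic properties. Asymptotic multiplicativity splits into three cases: products $ab$ reduce to the approximate multiplicativity of $\psi_n$ on the collapsed cell set above; the products $u_su_t$ and the covariance relation $\Phi_n(u_s)\Phi_n(a)\Phi_n(u_s)^*\approx\Phi_n(\alpha(s)a)$ are controlled by the Følner condition, since $P_{F_n}\lambda(s)P_{F_n}$ is asymptotically multiplicative and asymptotically unitary precisely because $|sF_n\triangle F_n|/|F_n|\to 0$, so that the boundary cells contribute a vanishing fraction of the dimension. Asymptotic isometry follows from the faithfulness of the induced representation together with the fact that, for each fixed $x=\sum_g a_gu_g$ and $n$ large, the tiled bulk of $F_n$ reproduces an honest corner of $\tilde\pi(x)$ on which $\psi_n$ is nearly isometric; a standard lower-bound argument then yields $\|\Phi_n(x)\|\to\|x\|$.

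I expect the main obstacle to be the bookkeeping that links the two independent approximation schemes. The group unitaries require $F_n$ to be Følner so that boundary effects vanish, while the algebra elements require the twists $\alpha(g^{-1})a$ to be nearly constant along each tile so that a single $\psi_n$ suffices; reconciling these is exactly why one needs a tiling $G=K_nL_n$ by Følner sets together with the precise almost-periodicity over $L_n\cap K_nK_n^{-1}F_n$. Quantifying the errors uniformly — ensuring that the number of boundary cells, the per-cell almost-periodicity error, and the tolerance $\varepsilon_n$ all tend to $0$ simultaneously while the cell size $|K_n|$ is allowed to grow — is the delicate part, and controlling the covariance relation across cell boundaries is where I would spend the most care.
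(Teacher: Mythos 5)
Your construction founders at the group part: you set $\Phi_n(u_s)=1\otimes P_{F_n}\lambda(s)P_{F_n}$ and claim asymptotic multiplicativity and asymptotic unitarity ``because $|sF_n\triangle F_n|/|F_n|\to 0$, so that the boundary cells contribute a vanishing fraction of the dimension.'' A vanishing fraction of the dimension controls the normalized trace norm, not the operator norm, and quasidiagonality demands operator-norm estimates. Concretely, $\bigl(P_{F}\lambda(s)P_{F}\bigr)\bigl(P_{F}\lambda(t)P_{F}\bigr)-P_{F}\lambda(st)P_{F}$ sends $\delta_g$ to $-\delta_{stg}$ whenever $g,stg\in F$ but $tg\notin F$, so its operator norm equals $1$ as soon as one such $g$ exists --- which happens for every $n$ in any infinite group; likewise $\bigl(P_F\lambda(s)P_F\bigr)^{*}\bigl(P_F\lambda(s)P_F\bigr)$ is the projection onto $\ell^2(F\cap s^{-1}F)$ and differs from $P_F$ in norm by $1$ whenever $F\not\subset s^{-1}F$. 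So sharp truncation to $\ell^2(F_n)$ can never produce the norm-asymptotic covariant pair you need; indeed, if F\o lner compressions alone sufficed, quasidiagonality of $C^{*}_r(G)$ for arbitrary amenable $G$ would follow trivially, whereas that statement required the Tikuisis--White--Winter theorem. This is precisely where residual finiteness must enter, and your proposal invokes the tiling only to collapse the twists $\alpha(g^{-1})a$ along cells --- not at the step where it is actually indispensable.

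The proof in \cite{5}, whose key estimates are reproduced here as Lemmas 2 and 3, repairs exactly this point: instead of compressing to the finite window $F_n$, one takes the finite-index normal subgroups $L_n$ and the rank-$|K_n|$ projections $P_n=\sum_{y\in K_n}P_{yL_n}$ onto the span of the weighted coset vectors $\xi_{yL_n}=\sum_{x\in yL_n}\phi_n(x)\delta_x$, which are supported on \emph{entire cosets}, spread over all of $G$. Because the tiling makes $syL_n$ again equal to some $zL_n$ with $z\in K_n$, Lemma 2(ii) gives $\lambda(s)\xi_{yL_n}=\xi_{syL_n}+\eta$ with $\|\eta\|^2\le |F_n\triangle F_ns|/|F_n|$, whence the operator-norm bound $\|P_n\lambda(s)-\lambda(s)P_n\|^2\le 4|F_n\triangle F_ns|/|F_n|$ of Lemma 3(ii): the range of $P_n$ is almost invariant in norm, not merely up to a small-dimensional boundary. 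One then verifies quasidiagonality directly in the Halmos sense, using the finite-rank projections $Q_n\otimes P_n$ in the faithful regular covariant representation on $\mathcal{H}\otimes\ell^2(G)$; the almost-periodicity hypothesis over $L_n\cap K_nK_n^{-1}F_n$ is what lets $P_n$ commute past the twisted diagonal copy of $\mathcal{A}$ (the computation reproduced in Claim 3 of this paper: writing $sy=lz$ with $z\in K_n$, $l\in L_n$, the error is $\|\alpha(l)a_i-a_i\|$). Your remaining steps --- collapsing the twists along tiles, choosing the matrix models from quasidiagonality of $\mathcal{A}$, the lower bound via faithfulness --- are sound in outline, but they rest on the group-unitary step that fails as stated.
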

which were both motivated by
\begin{theorem}[Pimsner--Voiculescu \cite{6}] Suppose that $\mathcal{A}$ is a separable unital quasidiagonal algebra and $\alpha$ is a homomorphism from $\mathbb{Z}$ into $Aut(\mathcal{A})$ such that there is a sequence of integers $0\le n_1< n_2<\dotsm $ satisfying \[\lim_{j\to \infty} \|\alpha (n_j)a-a\| =0\] for any $a\in \mathcal{A}$. Then $\mathcal{A}\rtimes_{\alpha} \mathbb{Z}$ is also quasidiagonal.
\end{theorem}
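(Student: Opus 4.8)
The plan is to verify Voiculescu's completely positive characterization of quasidiagonality: a separable unital $C^*$-algebra $B$ is quasidiagonal precisely when there exist unital completely positive maps $\psi_n\colon B\to M_{k_n}(\mathbb{C})$ with $\|\psi_n(xy)-\psi_n(x)\psi_n(y)\|\to 0$ and $\|\psi_n(x)\|\to\|x\|$ for all $x,y\in B$. I will produce such maps on $\mathcal{A}\rtimes_\alpha\mathbb{Z}$, which is generated by $\mathcal{A}$ together with a unitary $u$ satisfying $uau^*=\alpha(1)a$; since $\mathbb{Z}$ is amenable the full and reduced norms coincide, so $\|x\|$ is computed by the regular representation.

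First I would fix a faithful quasidiagonal representation $\pi\colon\mathcal{A}\to\mathcal{B}(\mathcal{H})$ and finite-rank projections $P_m\nearrow 1$ strongly with $\|[P_m,\pi(a)]\|\to 0$ for every $a\in\mathcal{A}$. The device is to close the $\mathbb{Z}$-action into a cyclic one of length $N=n_j$. On $\mathcal{H}^{(N)}=\mathcal{H}\otimes\mathbb{C}^N$ set $\rho(a)=\bigoplus_{i=0}^{N-1}\pi(\alpha(-i)a)$ and let $U=1_{\mathcal{H}}\otimes S$, where $S$ cyclically permutes the coordinates of $\mathbb{C}^N$. Then $U\rho(a)U^*$ agrees with $\rho(\alpha(1)a)$ on every summand except the one that wraps around, where the discrepancy is exactly $\|\alpha(n_j)a-a\|$; hence the covariance defect $\|U\rho(a)U^*-\rho(\alpha(1)a)\|$ tends to $0$ by hypothesis and $(\rho,U)$ is an approximately covariant pair with integrated form $(\rho\rtimes U)\bigl(\sum_k a_k u^k\bigr)=\sum_k\rho(a_k)U^k$.

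Next I would compress by $Q=P_m\otimes 1_N$ and set $\psi_{j,m}(x)=Q\,(\rho\rtimes U)(x)\,Q$, a unital completely positive map into $Q\,\mathcal{B}(\mathcal{H}^{(N)})\,Q\cong M_{N\cdot\operatorname{rank}(P_m)}(\mathbb{C})$. Because $Q$ commutes exactly with $U$, the variable $u$ is sent to an honest unitary and contributes no error, so verifying $\psi_{j,m}(xy)\approx\psi_{j,m}(x)\psi_{j,m}(y)$ on a finite set of elements $\sum_k a_k u^k$ leaves only two sources of error: the wrap-around covariance defect above, removed by taking $j$ large, and the commutators $\|[Q,\rho(b)]\|$ for the finitely many $\mathcal{A}$-entries that occur, removed by then taking $m$ large. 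This gives asymptotic multiplicativity.

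The main obstacle is asymptotic isometry on the whole crossed product (it is immediate on $\mathcal{A}$ from quasidiagonality of $\pi$). Since $P_m\nearrow 1$ strongly, for fixed $j$ the strong convergence $\psi_{j,m}(x)\to(\rho\rtimes U)(x)$ yields the lower bound $\liminf_m\|\psi_{j,m}(x)\|\ge\|(\rho\rtimes U)(x)\|$, while approximate covariance gives the matching $\|(\rho\rtimes U)(x)\|\le\|x\|+o(1)$. It then suffices to prove $\|(\rho\rtimes U)(x)\|\to\|x\|$ as $j\to\infty$, which I would obtain by comparing the length-$n_j$ cyclic covariant representation with the faithful regular representation $\lambda$ on $\ell^2(\mathbb{Z})\otimes\mathcal{H}$: compressing $\lambda(x)$ to the window $\{0,\dots,n_j-1\}$ reproduces $(\rho\rtimes U)(x)$ up to wrap-around and boundary terms whose relative weight is $O(1/n_j)$ for fixed $x$, so the two norms agree in the limit. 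Finally, diagonalizing the double sequence $\psi_{j,m}$ over a countable dense subset with shrinking tolerances produces a single sequence witnessing quasidiagonality of $\mathcal{A}\rtimes_\alpha\mathbb{Z}$.
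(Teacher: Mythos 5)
First, a contextual note: the paper does not prove this statement at all --- it is quoted as background from Pimsner--Voiculescu \cite{6}, and the paper's own Theorem~8 proves the MF analogue with related machinery. So your proposal must stand on its own. Your geometric picture is the classically correct one: closing the $\mathbb{Z}$-action into a cycle of length $n_j$, with wrap-around defect exactly $\|\alpha(n_j)a-a\|$ (your computation of the defect is right), is precisely the structure that the paper's general-$G$ machinery recreates via the coset vectors $\xi_{yL_n}$ with $L_n=n_j\mathbb{Z}$. The genuine gap is the step where you declare $\psi_{j,m}(x)=Q\,(\rho\rtimes U)(x)\,Q$ to be a unital completely positive map on $\mathcal{A}\rtimes_\alpha\mathbb{Z}$. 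Since $(\rho,U)$ is only \emph{approximately} covariant, $\rho\rtimes U$ is not a $\ast$-representation: the formula $\sum_k a_ku^k\mapsto\sum_k\rho(a_k)U^k$ is defined only on the dense subalgebra of finitely supported elements, and nothing guarantees it is bounded for the crossed product norm --- let alone completely positive. For a \emph{fixed} finite set of polynomials in the generators the norm inflation is controlled by the covariance defect times the degree, which is why finite-stage estimates of this kind suffice for MF via Proposition~1(ii); note that even there the paper obtains its lower bound only after pushing the approximate pairs into $\prod_t\mathcal{M}_{N_t}(\mathbb{C})/\sum_t\mathcal{M}_{N_t}(\mathbb{C})$, where covariance becomes exact and an honest homomorphism exists. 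But Voiculescu's u.c.p.\ characterization of quasidiagonality, which you invoke, requires genuine u.c.p.\ maps on the whole $C^\ast$-algebra, and your final diagonalization over a countable dense subset silently uses their uniform contractivity (equicontinuity) --- exactly what is missing. Quasidiagonality is strictly stronger than MF (this very paper's examples, such as $C^\ast_r(\mathbb{F}_2)$, witness the difference), so this step cannot be treated as a formality.

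The standard repairs are instructive. (a) Pimsner--Voiculescu's actual route stays inside the honest regular representation on $\ell^2(\mathbb{Z})\otimes\mathcal{H}$, where every operator $\lambda(x)$ is unambiguously defined, and uses Berg's cutting-and-gluing technique to perturb the bilateral shift by a small norm error into a direct sum of cyclic shifts; the hypothesis $\|\alpha(n_j)a-a\|\to0$ is what allows the $\mathcal{A}$-fibers at slots $i$ and $i+n_j$ to be glued compatibly. This produces honest finite rank projections, converging strongly to $I$, that asymptotically commute with a generating set of $\mathcal{A}\rtimes_\alpha\mathbb{Z}$ --- quasidiagonality by definition, with no u.c.p.\ maps needed. (b) Alternatively, as in reference \cite{5} and Lemmas~2 and~3 of this paper, replace your sharp window by the smoothed coset vectors $\xi_{yL_n}$ with weights $\phi_n$: the resulting projections $P_n$ satisfy $\|P_n\lambda(s)-\lambda(s)P_n\|\to0$ by the F\o lner property, and almost periodicity makes them asymptotically commute with the image of $\mathcal{A}$ as well, again yielding quasidiagonality directly in a faithful representation. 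Your closing norm-comparison paragraph (window compressions of $\lambda$ reproducing the cyclic model up to boundary terms of relative weight $O(1/n_j)$) is the right amenability heuristic and becomes rigorous once embedded in either framework.
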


MF algebras are important in their own right but also due to their connection to Voiculescu's topological free entropy dimension for a family of self-adjoint elements $x_1, \ldots, x_n$ in a unital $C^\ast$-algebra $\mathcal{A}$ (\cite{8}). The definition of topological free entropy dimension requires that Voiculescu's norm microstate space of $x_1, \dotsc, x_n$ is ``eventually" nonempty, which is equivalent to saying that the $C^\ast$-subalgebra generated by $x_1, \dotsc, x_n$ in $\mathcal{A}$ is an MF algebra. So it is crucial to determine if a $C^\ast$-algebra is MF, in which case its Voiculescu's topological free entropy dimension is well-defined.\\

In the next section we describe another connection, that between MF algebras and the Brown--Douglas--Fillmore $Ext$ semigroup (introduced in \cite{2}). We will then exhibit two new examples of crossed product $C^\ast$-algebras whose $Ext$ semigroup fails to be a group.
\section*{Background}
We start with a few well-known definitions and facts. 

\begin{definition} A discrete countable group $G$ is \emph{amenable} if there is a sequence of finite sets $\{F_n\}_{n=1}^{\infty}$ (called a \emph{F\o lner sequence}) such that $\displaystyle \lim_{n\to\infty} F_n =G$ and $\displaystyle \lim_{n\to \infty} |F_n\triangle F_n s|/|F_n|= 0$ for any $s\in G$. The group $G$ is \emph{residually finite} if for every $e\ne x\in G$, there is a finite index normal subgroup $L$ of $G$ such that $L\ne xL$. Stated differently, finite index normal subgroups of $G$ separate points in $G$. A \emph{tiling} of $G$ is a decomposition $G=KL$, with $K$ a finite set, so that every $x\in G$ is uniquely written as a product of an element in $K$ and an element in $L$.
\end{definition}

\begin{lemma} Assume $G$ is a discrete countable group. Then $G$ is amenable and residually finite if and only if $G$ has a F\o lner sequence $\{F_n\}_{n=1}^{\infty}$ for which there exists a separating sequence of finite index normal subgroups $L_n$ and a sequence of finite subsets $K_n\supset F_n$ such that $G$ has a tiling of the form $G=K_nL_n$ for all $n\ge 1$.  
\end{lemma}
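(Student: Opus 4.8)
The plan is to prove the two implications separately, with the $(\Leftarrow)$ direction being a matter of unwinding definitions and the $(\Rightarrow)$ direction carrying all the content. For $(\Leftarrow)$, I would simply observe that the existence of the Følner sequence $\{F_n\}$ is exactly the definition of amenability, and that a \emph{separating} sequence of finite index normal subgroups $L_n$ is precisely what residual finiteness asserts (finite index normal subgroups separate points). Both conclusions are therefore immediate from the hypotheses, and the tilings $G=K_nL_n$ are not even needed here.

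For $(\Rightarrow)$, I would start from an amenable, residually finite $G$ and first invoke amenability to fix a Følner sequence; after replacing it if necessary, I may assume it is increasing, exhausts $G$, and contains the identity $e$ (this standard reduction for countable amenable groups is a minor preliminary point worth recording). The core construction attaches to each $F_n$ a suitable $L_n$ by way of the finite set of nonidentity ``differences'' $D_n=(F_n^{-1}F_n)\setminus\{e\}$. By residual finiteness, for each $g\in D_n$ there is a finite index normal subgroup $N_g$ with $g\notin N_g$; setting $L_n=\bigcap_{g\in D_n}N_g$ yields a finite index normal subgroup (a finite intersection of such) that meets $D_n$ trivially.

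The point of this choice is that distinct elements of $F_n$ then lie in distinct left cosets of $L_n$, so $F_n$ injects into the finite quotient $G/L_n$; extending $F_n$ by one representative for each coset it misses produces a transversal $K_n\supseteq F_n$, which is exactly a tiling $G=K_nL_n$. It remains to check that the $L_n$ separate points, and this is where the exhaustion of $G$ by the Følner sequence is used: given $e\ne g\in G$, for all large $n$ both $e$ and $g$ lie in $F_n$, hence $g=e^{-1}g\in D_n$ and therefore $g\notin L_n$.

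I expect the main obstacle to be arranging all three requirements at once---a genuine tiling, the containment $K_n\supset F_n$, and the separating property---rather than any single one in isolation. The device that reconciles them is the passage through the difference set $D_n$: it simultaneously forces $F_n$ to be a partial transversal (yielding the tiling with $K_n\supset F_n$) and forces $L_n$ to exclude every eventually-appearing group element (yielding the separating property). Once this observation is in place, the remaining verifications are routine.
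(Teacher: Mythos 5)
Your proof is correct and takes essentially the same route as the source: the paper states this lemma without proof, deferring to \cite{5}, where the argument is exactly your construction --- intersect finitely many finite-index normal subgroups to obtain $L_n$ avoiding the difference set $(F_n^{-1}F_n)\setminus\{e\}$, so that $F_n$ is a partial transversal which extends to a full transversal $K_n\supset F_n$, giving the tiling $G=K_nL_n$ and the separating property. One small simplification: the paper's definition of a F\o lner sequence already requires $\lim_{n\to\infty}F_n=G$, so your preliminary reduction to an exhausting sequence is automatic and only the harmless normalization $e\in F_n$ (for large $n$) is used.
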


\begin{theorem} A discrete group $G$ is amenable if and only if $C^{\ast}_r(G) \cong C^{\ast} (G)$. Let $\mathcal{A}$ be a $C^\ast$-algebra. If $G$ is amenable and if there is a homomorphism $\alpha : G \to Aut(\mathcal{A})$, then $\mathcal{A}\rtimes_{\alpha ,r}G \cong \mathcal{A}\rtimes_{\alpha}G$.
\end{theorem}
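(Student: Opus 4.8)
The plan is to treat both isomorphisms by the same device: in each case there is a canonical surjection from the full object onto the reduced one, and amenability forces this surjection to be isometric via Fell's absorption principle together with the approximate invariance of the F\o lner vectors.

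For the group $C^{\ast}$-algebras, the left regular representation $\lambda$ extends, by the universal property of $C^{\ast}(G)$, to a surjective $\ast$-homomorphism $C^{\ast}(G)\to C^{\ast}_r(G)$; equivalently $\|f\|_{C^{\ast}_r}\le\|f\|_{C^{\ast}}$ for every $f$ in the group ring. The task is the reverse inequality under amenability. First I would use the F\o lner sequence $\{F_n\}$ to produce approximately invariant unit vectors $\xi_n\in\ell^2(G)$, built from the normalized indicator functions $\mathbf{1}_{F_n}$, satisfying $\|\lambda(s)\xi_n-\xi_n\|\to 0$ for each $s\in G$; this exhibits the trivial representation as weakly contained in $\lambda$. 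Next, for an arbitrary unitary representation $\rho$ on $H$, Fell's absorption principle gives a unitary equivalence $\lambda\otimes\rho\cong\lambda\otimes 1_H$, so $\lambda\otimes\rho$ is a multiple of $\lambda$. Since weak containment of the trivial representation in $\lambda$ yields that $\rho=1\otimes\rho$ is weakly contained in $\lambda\otimes\rho$, and the latter is a multiple of $\lambda$, we conclude that $\rho$ is weakly contained in $\lambda$, hence $\|\rho(f)\|\le\|\lambda(f)\|$. Taking the supremum over $\rho$ gives $\|f\|_{C^{\ast}}=\|\lambda(f)\|=\|f\|_{C^{\ast}_r}$, so the surjection is an isomorphism. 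Conversely, if $C^{\ast}(G)\cong C^{\ast}_r(G)$, then every representation of $C^{\ast}(G)$, in particular the trivial one, factors through $C^{\ast}_r(G)$ and is therefore weakly contained in $\lambda$; by Hulanicki's theorem this is equivalent to amenability.

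The crossed-product statement follows the same outline. Fixing a faithful representation of $\mathcal{A}$ on $H$ and forming the regular representation of the crossed product on $\ell^2(G,H)$ furnishes the canonical quotient map $\mathcal{A}\rtimes_{\alpha}G\to\mathcal{A}\rtimes_{\alpha,r}G$, so $\|x\|_r\le\|x\|$ for every $x$ in the dense $\ast$-subalgebra $C_c(G,\mathcal{A})$. For the reverse inequality I would invoke the crossed-product analogue of Fell's absorption: for any covariant representation $(\rho,u)$ of $(\mathcal{A},G,\alpha)$, tensoring with the left regular representation of $G$ yields, after a unitary identification on $\ell^2(G,H_\rho)$, a representation governed by the regular representation of the crossed product. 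Combined with the approximate invariance of the same F\o lner vectors $\xi_n$, this shows every covariant representation is weakly contained in the regular representation of $\mathcal{A}\rtimes_{\alpha}G$, forcing $\|x\|\le\|x\|_r$ and hence equality of the two norms.

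The main obstacle is the crossed-product absorption step. Unlike the purely group-theoretic case, the intertwining unitary $W$ on $\ell^2(G,H_\rho)$ must be compatible with the covariance relation $u_s\rho(a)u_s^{\ast}=\rho(\alpha_s(a))$: one has to verify explicitly that $W$ intertwines the $\mathcal{A}$-action and not merely the $G$-action before the F\o lner estimate can be transferred. Once this compatibility is established, the approximate-invariance computation from the group case applies verbatim to elements of $C_c(G,\mathcal{A})$, and the two crossed-product norms coincide. As this is a standard fact, in practice I would carry out the F\o lner-vector argument in detail and defer the routine verification of the absorption identity to a reference such as Brown--Ozawa or Pedersen.
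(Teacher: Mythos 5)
This statement appears in the paper's Background section as a well-known fact and is given no proof there (it is the classical theorem that amenability makes the full and reduced constructions coincide), so there is no in-paper argument to compare against; your proposal must be judged on its own. On that score it is correct and is the standard textbook proof: F\o lner vectors give almost-invariant unit vectors, hence weak containment of the trivial representation in $\lambda$; Fell's absorption principle $\lambda\otimes\rho\cong\lambda\otimes 1$ plus continuity of tensoring under weak containment gives $\rho\prec\lambda$ for every unitary representation $\rho$, so the canonical surjection $C^{\ast}(G)\to C^{\ast}_r(G)$ is isometric; Hulanicki's theorem handles the converse. The crossed-product half is likewise the standard argument (Brown--Ozawa's proof that $\mathcal{A}\rtimes_{\alpha}G\cong\mathcal{A}\rtimes_{\alpha,r}G$ for amenable $G$): one compresses the covariant pair $\bigl(1\otimes\rho,\;\lambda\otimes u\bigr)$ on $\ell^2(G)\otimes H_\rho$ along the isometries $h\mapsto\xi_n\otimes h$ to recover $\rho\rtimes u$ in the limit, and the absorption unitary $(W\xi)(t)=u_t\xi(t)$ identifies $\bigl(1\otimes\rho,\;\lambda\otimes u\bigr)$ with the regular representation induced from $\rho$ --- you correctly isolate the one nontrivial verification, namely that $W$ intertwines the $\mathcal{A}$-part via the covariance relation, which is exactly where the computation lives.

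Two small cautions. First, in the converse direction your phrase ``every representation of $C^{\ast}(G)$ \ldots factors through $C^{\ast}_r(G)$'' tacitly assumes the isomorphism is the canonical quotient map; under an abstract isomorphism the trivial representation of $G$ need not correspond to anything useful (whether abstract isomorphism implies amenability is a genuinely harder question), so you should state that the theorem is read, as is standard, as asserting injectivity of the canonical surjection. Second, in the crossed-product case the order of operations matters: the F\o lner compression must be applied to the tensored pair \emph{after} observing it is covariant, and the absorption identification then bounds its integrated form by the reduced norm; your sketch has this essentially right but would need the two steps written in that sequence, since compressing the induced regular representation directly by $\xi_n\otimes h$ produces averaged matrix coefficients $\rho(\alpha_{t^{-1}}(a))$ over $t\in F_n$, which do not converge to $\rho\rtimes u$.
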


Quasidiagonal operators were first considered by Halmos.

\begin{definition}
A separable family of operators $\{T_1, T_2,\dotsc \}\subset \mathcal{B}(\mathcal{H})$ is \emph{quasidiagonal} if there exists a sequence of finite rank projections $\{P_n\}_{n=1}^{\infty}$ such that $P_n\to I$ in SOT and $\|P_nT_j-T_jP_n\|\to 0$ for all $j\ge 1$ as $n\to \infty$. A separable C*-algebra is \emph{quasidiagonal} if it has a faithful $\ast$-representation to a quasidiagonal set of operators.
\end{definition}

\begin{theorem}[Rosenberg \cite{7}] Let $G$ be a discrete group. If $C_r^{\ast}(G)$ is a quasidiagonal algebra, then $G$ is amenable.
\end{theorem}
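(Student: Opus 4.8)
The plan is to show directly that quasidiagonality of $C_r^\ast(G)$ forces the left regular representation $\lambda$ to have a sequence of almost invariant unit vectors in $\ell^2(G)$, that is, unit vectors $\xi_n$ with $\|\lambda_g\xi_n-\xi_n\|_2\to 0$ for every $g\in G$. By the Hulanicki--Reiter characterization of amenability (the trivial representation $1_G$ is weakly contained in $\lambda$ if and only if $G$ is amenable), this is exactly the conclusion we want. The engine for producing such vectors is to feed the quasidiagonal finite-rank projections into the adjoint (conjugation) representation and then invoke Fell's absorption principle.

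Concretely, I would fix a faithful representation $\pi\colon C_r^\ast(G)\to\mathcal B(\mathcal H)$ witnessing quasidiagonality, together with finite-rank projections $P_n\to I$ in SOT satisfying $\|[\pi(x),P_n]\|\to 0$ for all $x$. Write $U_g=\pi(\lambda_g)$, so that $\sigma\colon g\mapsto U_g$ is a unitary representation of $G$ whose integrated form factors through $C_r^\ast(G)$; hence $\sigma$ is weakly contained in $\lambda$, i.e. $\sigma\prec\lambda$. Set $d_n=\operatorname{rank}P_n$ and view the normalized projection $\widehat P_n=d_n^{-1/2}P_n$ as a unit vector in the Hilbert--Schmidt space $\mathrm{HS}(\mathcal H)\cong\mathcal H\otimes\overline{\mathcal H}$, on which $G$ acts by the conjugation representation $\rho(g)T=U_gTU_g^\ast$. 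Under the identification $\mathrm{HS}(\mathcal H)\cong\mathcal H\otimes\overline{\mathcal H}$ one has $\rho=\sigma\otimes\overline\sigma$.

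The crux is that the $\widehat P_n$ are almost invariant for $\rho$. Since $[U_g,P_n]$ has rank at most $2d_n$ and operator norm $\varepsilon_n(g):=\|[U_g,P_n]\|\to 0$, its Hilbert--Schmidt norm obeys $\|[U_g,P_n]\|_{\mathrm{HS}}^2\le 2d_n\,\varepsilon_n(g)^2$. A direct expansion using $U_g^\ast U_g=I$ and $P_n^2=P_n$ yields the trace identity $\|[U_g,P_n]\|_{\mathrm{HS}}^2=2\big(d_n-\langle U_gP_nU_g^\ast,P_n\rangle_{\mathrm{HS}}\big)$, so that $\langle\rho(g)\widehat P_n,\widehat P_n\rangle=d_n^{-1}\langle U_gP_nU_g^\ast,P_n\rangle_{\mathrm{HS}}\ge 1-\varepsilon_n(g)^2\to 1$. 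As the $\widehat P_n$ are unit vectors, this gives $\|\rho(g)\widehat P_n-\widehat P_n\|\to 0$ for every $g$.

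Finally I would transfer this almost-invariance to $\lambda$. Because $\sigma\prec\lambda$ and tensoring by the fixed representation $\overline\sigma$ preserves weak containment, $\rho=\sigma\otimes\overline\sigma\prec\lambda\otimes\overline\sigma$; by Fell's absorption principle $\lambda\otimes\overline\sigma\cong\lambda\otimes 1_{H_\sigma}$, a multiple of $\lambda$. The vectors $\widehat P_n$ thus exhibit $1_G\prec\rho$, hence $1_G$ is weakly contained in a multiple of $\lambda$, hence in $\lambda$ itself, since weak containment of $1_G$ is insensitive to multiplicity. This produces the desired almost invariant unit vectors for $\lambda$ in $\ell^2(G)$, and Hulanicki's theorem delivers amenability of $G$. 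I expect the main obstacle to be the normalization step in the previous paragraph: passing from operator-norm smallness of the commutators to genuine Hilbert--Schmidt almost-invariance after dividing by $\sqrt{d_n}$ is exactly where the finite rank of $P_n$ is essential and where the factor $d_n$ must cancel cleanly. A secondary point requiring care is the correct bookkeeping of Fell's absorption with the conjugate representation $\overline\sigma$ and the multiplicity-independence of weak containment of the trivial representation.
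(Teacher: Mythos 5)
Your proof is correct, but the paper itself contains no proof of this statement: it is quoted as background with a citation to Rosenberg's appendix \cite{7}, so the relevant comparison is with the classical argument rather than with anything in the text. Rosenberg's route is, roughly, to use Voiculescu's theorem to transfer quasidiagonality of the algebra to the left regular representation itself (possible since $\lambda(C^\ast_r(G))$ contains no nonzero compacts for $G$ infinite), and then to convert the almost-commuting finite-rank projections $P_n$ on $\ell^2(G)$ into an invariant mean, $m(f)=\lim_n \mathrm{Tr}(P_nM_fP_n)/\mathrm{Tr}(P_n)$ for $f\in\ell^\infty(G)$, a hypertrace-type construction. You replace both ingredients: working in an arbitrary faithful quasidiagonal representation, the observation that the integrated form of $\sigma=\pi\circ\lambda$ factors through $C^\ast_r(G)$ gives $\sigma\prec\lambda$, and Fell absorption $\lambda\otimes\overline{\sigma}\cong\lambda\otimes 1_{H_\sigma}$ returns you to (a multiple of) $\lambda$ without ever invoking Voiculescu's theorem; meanwhile the Hilbert--Schmidt normalization does cancel cleanly, exactly as you flagged, since $\|[U_g,P_n]\|_{HS}^2\le 2d_n\varepsilon_n(g)^2$ (rank at most $2d_n$ times the square of the operator norm) combines with the trace identity $\|[U_g,P_n]\|_{HS}^2=2\bigl(d_n-\langle U_gP_nU_g^\ast,P_n\rangle_{HS}\bigr)$ to give $\langle\rho(g)\widehat P_n,\widehat P_n\rangle\ge 1-\varepsilon_n(g)^2$, hence almost invariant vectors for $\rho$, and $1_G\prec\lambda$ follows because weak containment is insensitive to multiplicity ($\ker(\oplus\lambda)=\ker\lambda$ in $C^\ast(G)$), whence amenability by Hulanicki--Reiter. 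What your approach buys is independence from Voiculescu's theorem and from the invariant-mean machinery, at the price of the representation-theoretic apparatus of weak containment; this Hilbert--Schmidt conjugation trick is the proof one finds in modern treatments such as Brown--Ozawa. Two small points to record explicitly: the paper's definition of quasidiagonality gives commutator decay only on a separable family, so use the estimate $\|[P_n,x]\|\le\|[P_n,x_j]\|+2\|x-x_j\|$ on a dense sequence to get $\varepsilon_n(g)\to 0$ for every $g$; and discard the finitely many $n$ with $P_n=0$ (guaranteed by $P_n\to I$ in SOT) before normalizing by $d_n^{-1/2}$.
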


MF algebras were introduced by Blackadar and Kirchberg in \cite{1}.
\begin{definition} A separable $C^\ast$-algebra $\mathcal{A}$ is an \emph{MF algebra} if there is an embedding from $\mathcal{A}$ to \[\prod_{t=1}^{\infty} \mathcal{M}_{N_t}(\mathbb{C}) / \sum_{t=1}^{\infty} \mathcal{M}_{N_t}(\mathbb{C}) \]for positive integers $\{ N_t\}_{t=1}^{\infty}$. If $\mathfrak{A} = \left \{A_t \right\}_{t=1}^{\infty}$ is an element of the above C*-algebra, define its norm by $\|\displaystyle \mathfrak{A}\| = \limsup_{t\to \infty} \|A_t\|_{\mathcal{M}_{N_t}(\mathbb{C})} $.
\end{definition}

\begin{theorem}[Blackadar--Kirchberg \cite{1}] A separable $C^\ast$-algebra $\mathcal{A}$ is MF if and only if every finitely generated $C^\ast$-subalgebra of $\mathcal{A}$ is MF. Subalgebras of MF algebras are also MF. Every quasidiagonal algebra $\mathcal{A}$ is MF and the converse is true if, in addition, $\mathcal{A}$ is nuclear.
\end{theorem}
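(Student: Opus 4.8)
The plan is to reformulate the MF condition in an equivalent \emph{local} form and then treat the four assertions in increasing order of difficulty. First I would record the working characterization: a separable $\mathcal{A}$ is MF if and only if there are integers $(N_t)$ and $\ast$-preserving linear contractions $\phi_t : \mathcal{A}\to \mathcal{M}_{N_t}(\mathbb{C})$ with $\lim_t \|\phi_t(ab)-\phi_t(a)\phi_t(b)\|=0$ and $\lim_t \|\phi_t(a)\|=\|a\|$ for all $a,b\in\mathcal{A}$. Indeed, given such $\phi_t$, the map $a\mapsto \{\phi_t(a)\}_t + \sum_t \mathcal{M}_{N_t}(\mathbb{C})$ is a $\ast$-homomorphism into the quotient (asymptotic multiplicativity makes it multiplicative modulo $\sum_t \mathcal{M}_{N_t}(\mathbb{C})$) that is isometric by the norm condition; conversely an embedding lifts coordinatewise to such a family, since the quotient norm of a coset is the $\limsup$ of the norms of any bounded lift.

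With this in hand, ``subalgebras of MF algebras are MF'' is immediate: restrict the $\phi_t$, and note that a $C^\ast$-subalgebra of a separable algebra is separable. This also gives the forward implication of the first equivalence. For the reverse implication, suppose every finitely generated subalgebra of $\mathcal{A}$ is MF. Fix a dense sequence $(a_k)$ and set $\mathcal{A}_n = C^\ast(a_1,\dots,a_n)$, so that $\mathcal{A}=\overline{\bigcup_n \mathcal{A}_n}$ and each $\mathcal{A}_n$ is MF, supplying by the characterization an asymptotic family $\theta^n_t : \mathcal{A}_n\to \mathcal{M}_{k(n,t)}(\mathbb{C})$. Choosing finite subsets $S_n\subset \mathcal{A}_n$ with $\bigcup_n S_n$ dense, I would for each $n$ select $t_n$ so that the product and norm estimates hold to within $1/n$ on $S_n$, and set $\psi_n=\theta^n_{t_n}$. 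The resulting diagonal family is asymptotically multiplicative and asymptotically isometric on the dense set $\bigcup_n S_n$, exhibiting $\mathcal{A}$ as MF. The delicate point is the bookkeeping ensuring the errors tend to zero simultaneously for product, adjoint, and norm; this is where the finite-generation hypothesis is genuinely used, since each $\mathcal{A}_n$ furnishes a bona fide \emph{isometric} approximation to draw on.

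For ``quasidiagonal $\Rightarrow$ MF,'' represent $\mathcal{A}$ faithfully on $\mathcal{H}$ with finite-rank projections $P_n\to I$ in SOT and $\|P_nT-TP_n\|\to 0$ on a dense set, and set $\phi_n(T)=P_nTP_n$, viewed in $\mathcal{M}_{N_n}(\mathbb{C})$ with $N_n=\operatorname{rank}P_n$. These maps are exactly $\ast$-preserving; they are asymptotically multiplicative because $P_nSTP_n-P_nSP_nTP_n=P_nS(I-P_n)TP_n$ has norm at most $\|S\|\,\|(TP_n-P_nT)P_n\|\to 0$; and they are asymptotically isometric because, choosing a unit vector $\xi$ with $\|T\xi\|$ near $\|T\|$, one has $P_nTP_n\xi\to T\xi$, so $\limsup_n\|\phi_n(T)\|\ge \|T\|$, while $\|\phi_n(T)\|\le\|T\|$ always. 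The local characterization then yields MF.

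The substantive direction is ``nuclear MF $\Rightarrow$ quasidiagonal,'' and I expect this to be the main obstacle. The matricial approximations from MF are merely $\ast$-linear contractions, whereas quasidiagonality (via Voiculescu's completely positive characterization) demands \emph{completely positive} asymptotically multiplicative, asymptotically isometric maps into matrices. The plan is to exploit nuclearity twice: first to factor the identity of $\mathcal{A}$ approximately through matrix algebras by completely positive contractions, and then to compose and perturb these factorizations against the MF data so as to replace the linear $\phi_t$ by completely positive contractions without destroying asymptotic multiplicativity or the norm lower bound. Verifying that this replacement preserves asymptotic isometry---that nuclearity does not cost the quantitative norm control supplied by MF---is the crux, and it is precisely here that nuclearity is indispensable, since in general MF is strictly weaker than quasidiagonality.
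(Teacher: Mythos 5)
The paper never proves this statement: it is quoted as background (Theorem~6) with a citation to Blackadar--Kirchberg \cite{1}, so your attempt can only be measured against the standard arguments in the literature. Your treatment of the first three assertions follows that standard route and is essentially sound: the local characterization by asymptotically multiplicative, asymptotically isometric maps into matrix algebras (this is the same device as the paper's Proposition~1(ii), in polynomial-norm form), restriction of the maps for the subalgebra claim, the diagonal argument over $\mathcal{A}_n = C^{\ast}(a_1,\dotsc ,a_n)$ for the local-to-global direction, and the compressions $T\mapsto P_nTP_n$ for ``quasidiagonal $\Rightarrow$ MF.'' One caveat on your ``working characterization'': a genuinely \emph{linear} $\ast$-preserving contractive lift of an embedding into $\prod_t \mathcal{M}_{N_t}(\mathbb{C})/\sum_t \mathcal{M}_{N_t}(\mathbb{C})$ need not exist coordinatewise ($\sum_t \mathcal{M}_{N_t}(\mathbb{C})$ is not complemented), so the correct formulation imposes the asymptotic conditions only on a countable dense set with no linearity demanded, or works directly with polynomial norms as the paper does; this relaxation costs nothing in your later steps.

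The genuine gap is the fourth assertion, ``nuclear $+$ MF $\Rightarrow$ quasidiagonal,'' which you explicitly leave as a plan. The plan as stated would not close: the completely positive maps $\rho_k : \mathcal{A}\to \mathcal{M}_{n_k}(\mathbb{C})$ supplied by nuclearity (CPAP) are asymptotically isometric but in general nowhere near asymptotically multiplicative --- if they were, every nuclear $C^{\ast}$-algebra would be quasidiagonal, which fails (e.g.\ the Cuntz algebra $\mathcal{O}_2$ is nuclear and not quasidiagonal) --- and there is no evident perturbation transferring multiplicativity from the non-positive MF lifts $\phi_t$ onto completely positive maps while retaining norm control. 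The missing idea is a lifting theorem, not a perturbation: since $\mathcal{A}$ is separable and nuclear, every contractive completely positive map from $\mathcal{A}$ into a quotient lifts (Choi--Effros, in the form of the Effros--Haagerup lifting theorem), so the MF embedding $\pi : \mathcal{A}\hookrightarrow \prod_t \mathcal{M}_{N_t}(\mathbb{C})/\sum_t \mathcal{M}_{N_t}(\mathbb{C})$ lifts to a single ccp map $\Phi=(\phi_t) : \mathcal{A}\to \prod_t \mathcal{M}_{N_t}(\mathbb{C})$. Its coordinates are automatically ccp, asymptotically multiplicative (because $q\circ \Phi=\pi$ is a $\ast$-homomorphism) and satisfy $\limsup_t \|\phi_t(a)\|=\|a\|$ (because $\pi$ is isometric); after a subsequence upgrades the $\limsup$ to a limit on a dense set, Voiculescu's abstract characterization of quasidiagonality by asymptotically multiplicative, asymptotically isometric ccp maps into matrix algebras finishes the proof. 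With that substitution your outline is complete; without it, the final step is exactly the unproved crux you acknowledge.
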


An exciting result connecting quasidiagonal and MF algebras on one hand, and Brown--Douglas--Fillmore theory of extensions on the other, is the following.

\begin{theorem} Let $\mathcal{A}$ be a unital separable MF algebra. If $\mathcal{A}$ is not quasidiagonal, then $Ext(\mathcal{A})$ fails to be a group.
\end{theorem}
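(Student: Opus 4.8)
The plan is to prove the contrapositive. Since $\mathcal{A}$ is MF by hypothesis, it suffices to show that if $Ext(\mathcal{A})$ is a group, then $\mathcal{A}$ is quasidiagonal. First I would fix an MF embedding $\tau\colon\mathcal{A}\hookrightarrow\prod_t\mathcal{M}_{N_t}(\mathbb{C})/\sum_t\mathcal{M}_{N_t}(\mathbb{C})$. Putting $\mathcal{H}=\bigoplus_t\mathbb{C}^{N_t}$ and letting $\mathcal{D}=\prod_t\mathcal{M}_{N_t}(\mathbb{C})\subset\mathcal{B}(\mathcal{H})$ be the block-diagonal operators, one has $\mathcal{D}\cap\mathcal{K}(\mathcal{H})=\sum_t\mathcal{M}_{N_t}(\mathbb{C})$, so the quotient map $q\colon\mathcal{B}(\mathcal{H})\to\mathcal{Q}(\mathcal{H})$ identifies $\prod_t\mathcal{M}_{N_t}/\sum_t\mathcal{M}_{N_t}$ with a unital subalgebra of the Calkin algebra. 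Thus $\tau$ becomes a unital, essential, faithful extension and determines a class $[\tau]\in Ext(\mathcal{A})$. The entire point is that this particular extension is \emph{block-diagonal}, and the strategy is to convert a completely positive lift of it into matricial quasidiagonal data for $\mathcal{A}$ itself.

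Next I would invoke the group hypothesis. If $Ext(\mathcal{A})$ is a group then $[\tau]$ is invertible, and invertible extensions are precisely the semisplit ones (those admitting a unital completely positive lifting); hence $\tau$ lifts to a unital completely positive map $\Phi\colon\mathcal{A}\to\mathcal{B}(\mathcal{H})$ with $q\circ\Phi=\tau$. (Concretely, one may instead pick $\sigma$ with $\tau\oplus\sigma$ trivial, lift $\tau\oplus\sigma$ by a $\ast$-homomorphism, and compress to the $\mathcal{H}$-corner.) The crucial consequence of $q\Phi=\tau$ is that $\Phi$ is block-diagonal modulo the compacts: writing $Q_t$ for the projection onto the $t$-th summand, we have $\Phi(a)=d(a)+k(a)$ with $d(a)\in\mathcal{D}$ and $k(a)\in\mathcal{K}(\mathcal{H})$ for every $a$.

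Then I would define the matricial maps $\psi_t=Q_t\Phi(\cdot)Q_t\colon\mathcal{A}\to\mathcal{M}_{N_t}(\mathbb{C})$, which are unital and completely positive. Two estimates finish the argument, both driven by the facts that $Q_t\to 0$ strongly and that the relevant defects are compact. For asymptotic multiplicativity, write $\psi_t(ab)-\psi_t(a)\psi_t(b)=Q_t\bigl(\Phi(ab)-\Phi(a)\Phi(b)\bigr)Q_t-Q_t\Phi(a)Q_t^{\perp}\Phi(b)Q_t$; the first term tends to $0$ because $\Phi(ab)-\Phi(a)\Phi(b)$ is compact and $\|Q_t k Q_t\|\to 0$ for compact $k$, while the second tends to $0$ because $\|Q_t\Phi(a)Q_t^{\perp}\|=\|Q_t k(a)Q_t^{\perp}\|\to 0$ (the block-diagonal summand $d(a)$ commutes with $Q_t$, leaving only a compact contribution). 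For asymptotic isometry, the sequence $(\psi_t(a))_t$ represents $\tau(a)$ in $\prod_t\mathcal{M}_{N_t}/\sum_t\mathcal{M}_{N_t}$, so $\limsup_t\|\psi_t(a)\|=\|\tau(a)\|=\|a\|$, while $\|\psi_t(a)\|\le\|a\|$ always. A diagonal subsequence then yields unital completely positive maps into matrix algebras that are asymptotically multiplicative and asymptotically isometric; by Voiculescu's completely positive characterization of quasidiagonality this forces $\mathcal{A}$ to be quasidiagonal, contradicting the hypothesis.

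The conceptual crux is the passage from the group hypothesis to the completely positive lift $\Phi$ via the identification of invertible extensions with semisplit ones: without such a lift there is no way to manufacture the matricial maps. The technical heart is the pair of norm estimates above, whose essential input is that every defect is compact and $Q_t\to 0$ strongly, so that the pinching by $Q_t$ annihilates them in the limit. I expect the main obstacle to be the careful bookkeeping of these compact error terms, so as to confirm that the compressions $\psi_t$ are genuinely asymptotically multiplicative; the block-diagonal identification of $\tau$ and the norm computation are essentially formal once the extension has been set up with the block projections $Q_t$.
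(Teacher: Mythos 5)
The paper gives no proof of this theorem at all: it is quoted as a known background result (the Hadwin--Shen/Haagerup--Thorbj\o rnsen circle of ideas, cf.\ \cite{3}, \cite{4}), so your attempt can only be measured against the standard argument from the literature --- which is exactly what you reconstruct. Your setup and both norm estimates are correct: $\mathcal D\cap\mathcal K(\mathcal H)=\sum_t\mathcal M_{N_t}(\mathbb C)$, so the MF embedding becomes a Busby invariant $\tau:\mathcal A\to\mathcal Q(\mathcal H)$; invertibility of $[\tau]$ yields the u.c.p.\ lift $\Phi$ (your parenthetical compression argument is the right easy direction); $\Phi(ab)-\Phi(a)\Phi(b)$ is compact because $q\circ\Phi=\tau$ is multiplicative; and $Q_t\Phi(a)Q_t^{\perp}=Q_tk(a)Q_t^{\perp}\to 0$ because the block-diagonal part commutes with $Q_t$ and $Q_t\to 0$ strongly. (A cosmetic point you gloss over: the MF embedding need not be unital, so one first lifts the projection $\tau(1)$ to a block-diagonal projection and compresses to make the extension unital; this is routine.)

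The one genuine flaw is the final extraction step. The norm on $\prod_t\mathcal M_{N_t}/\sum_t\mathcal M_{N_t}$ is a $\limsup$, so you only know $\limsup_t\|\psi_t(a)\|=\|a\|$ for each $a$ \emph{separately}, and the $\limsup$ may be attained along disjoint subsequences for different elements, so no diagonal subsequence need exist. Concretely, take $\mathcal A=\mathbb C^2$ with all $N_t=1$ and let the $t$-th coordinate map be $(x,y)\mapsto x$ for $t$ even and $(x,y)\mapsto y$ for $t$ odd: this is an isometric $\ast$-embedding into $\prod_t\mathcal M_{N_t}/\sum_t\mathcal M_{N_t}$, yet no single subsequence is asymptotically isometric on both $(1,0)$ and $(0,1)$. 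The standard repair keeps the rest of your proof intact: replace subsequences by finite direct sums. Fix a dense sequence $(a_i)$ in $\mathcal A$, choose windows $T_n\le t\le T_n'$ with $T_n\to\infty$ and $T_n'$ so large that for each $i\le n$ some $t$ in the window satisfies $\|\psi_t(a_i)\|\ge\|a_i\|-\tfrac1n$ (possible precisely because of the $\limsup$), and set $\Psi_n=\bigoplus_{T_n\le t\le T_n'}\psi_t$, a u.c.p.\ map into a single matrix algebra. Its multiplicativity defect is the maximum of the individual defects over $t\ge T_n$, hence tends to $0$, while by construction $\|\Psi_n(a_i)\|\to\|a_i\|$; now Voiculescu's criterion applies verbatim and your contradiction goes through.
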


 \begin{example}[Haagerup--Thorbs\o rnsen \cite{3}]
 The reduced group $C^\ast$-algebra of the free group on $n$ generators, namely $C^{\ast}_r (\mathbb{F}_n)$, is an MF algebra but it is not quasidiagonal or nuclear (since $\mathbb{F}_n$ is not amenable). Therefore, $Ext(C^{\ast}_r (\mathbb{F}_n))$ is not a group.
 \end{example}
More examples of this flavor were exhibited in \cite{4}. 

\section*{Preliminary Facts}
Here we state a few facts that will be used extensively in the rest of this note. In what follows, $\mathbb{C} (X_1, \dotsc , X_m)$ will denote the set of all non-commutative complex polynomials in $X_1, \dotsc ,X_m, X_1^{\ast},\dotsc , X_m^{\ast}$. The first result gives equivalent definitions for an MF algebra. Refer to \cite{4} and the references therein for a proof.
\begin{proposition} Suppose $\mathcal{A}$ is a unital $C^\ast$-algebra generated by a family of elements $a_1, \dotsc ,a_m$ in $\mathcal{A}$. Then the following are equivalent:
\begin{enumerate}
\item [\emph{(i)}]$\mathcal{A}$ is an MF algebra.
\item [\emph{(ii)}]For any $\epsilon>0$  and any finite subset $\{ f_1, \dotsc ,f_J\}$ of $\mathbb{C} (X_1, \dotsc , X_m)$, there is a positive integer $N$ and a family of matrices $\{ A_1, \dotsc , A_m\}$ in $\mathcal{M}_N (\mathbb{C})$, such that \[ \max_{1\le j\le J} \left| \|f_j(A_1, \dotsc , A_m)\|_{\mathcal{M}_N (\mathbb{C})} - \|f_j (a_1, \dotsc ,a_m)\|_{\mathcal{A}}\right| < \epsilon.\]
\item [\emph{(iii)}]Suppose $\pi : \mathcal{A} \to \mathcal{B}(\mathcal{H})$ is a faithful $\ast$-representation of $\mathcal{A}$ on an infinite dimensional separable complex Hilbert space $\mathcal{H}$. Then there is a family $\displaystyle \{ [a_1]_n, \dotsc , [a_m]_n\}_{n=1}^{\infty} \subset \mathcal{B}(\mathcal{H})$ such that
  \begin{enumerate}
  \item [\emph{(a)}]For each $n\ge 1$,  $\displaystyle \{ [a_1]_n, \dotsc , [a_m]_n\} \subset \mathcal{B}(\mathcal{H})$ is quasidiagonal;
  \item [\emph{(b)}]$\displaystyle \left\| f\negthickspace\left( [a_1]_n,\dotsc , [a_m]_n\right)\right\|_{\mathcal{B}(\mathcal{H})} \to \left\| f\negthickspace \left( a_1,\dotsc , a_m\right)\right\|_{\mathcal{A}}$ as $n\to \infty$, for any $f\in \mathbb{C}(X_1, \dotsc , X_m)$;
  \item [\emph{(c)}]$[a_i]_n \to \pi (a_i)$ in $\ast$-SOT as $n\to \infty$, for every $1\le i\le m$. 
 \end{enumerate}
\end{enumerate}
\end{proposition}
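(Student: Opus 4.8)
The plan is to treat the matrix-approximation condition (ii) as the hub and prove the cycle (ii) $\Rightarrow$ (i) $\Rightarrow$ (ii) together with the pair (ii) $\Leftrightarrow$ (iii); condition (ii) is the most convenient to pass through because it is purely finitary. For (ii) $\Rightarrow$ (i), I would enumerate a countable set $\{f_1, f_2, \dots\}$ of polynomials with coefficients in $\mathbb{Q}+i\mathbb{Q}$ that is dense in $\mathbb{C}(X_1, \dots, X_m)$, and for each $k$ apply (ii) to the first $k$ of them with $\epsilon = 1/k$, producing matrices $A_1^{(k)}, \dots, A_m^{(k)} \in \mathcal{M}_{N_k}(\mathbb{C})$. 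Setting $\Phi(a_i) = [(A_i^{(k)})_k]$ in the quotient $\prod_k \mathcal{M}_{N_k}(\mathbb{C})/\sum_k \mathcal{M}_{N_k}(\mathbb{C})$ and extending multiplicatively, linearly, and $\ast$-compatibly to polynomials, the construction forces $\limsup_k \|f_j(A^{(k)})\| = \|f_j(a)\|_{\mathcal{A}}$, so the quotient $\limsup$-norm reproduces $\|\cdot\|_{\mathcal{A}}$ on the dense polynomial $\ast$-algebra. Hence $\Phi$ is an isometric $\ast$-homomorphism there and extends to an embedding of $\mathcal{A}$, which is exactly (i).

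For (i) $\Rightarrow$ (ii), lift each $\Phi(a_i)$ to a sequence $(A_i^{(t)})_t$, so that isometry of $\Phi$ gives $\|f(a)\|_{\mathcal{A}} = \limsup_t \|f(A^{(t)})\|$ for every $f$. The one genuinely delicate point is that a single index $t$ need not make all $J$ norms close to their targets at once, since the $\limsup$ for different $f_j$ may be realized along different subsequences. I would resolve this with a finite direct sum. Writing $c_j = \|f_j(a)\|_{\mathcal{A}}$, choose a threshold $T$ beyond which $\|f_j(A^{(t)})\| < c_j + \epsilon$ holds for all $j\le J$ simultaneously (possible since there are finitely many $j$), and then for each $j$ pick some $t_j \ge T$ with $\|f_j(A^{(t_j)})\| > c_j - \epsilon$. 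Set $A_i = \bigoplus_{k=1}^{J} A_i^{(t_k)}$. Because noncommutative polynomials respect direct sums and $\|X \oplus Y\| = \max(\|X\|, \|Y\|)$, we get $\|f_j(A)\| = \max_k \|f_j(A^{(t_k)})\|$, which lies in $(c_j - \epsilon,\, c_j + \epsilon)$ for every $j$ at once. This direct-sum trick is the crux of the forward implication.

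For (iii) $\Rightarrow$ (ii), I would fix $n$ and exploit quasidiagonality of $\{[a_i]_n\}$: choosing finite-rank projections $P \to I$ in SOT with $\|P[a_i]_n - [a_i]_n P\|$ small, the compressions $B_i = P[a_i]_n P$ (regarded as matrices on $P\mathcal{H}$) satisfy $\|f(B)\| \approx \|P f([a]_n) P\|$ because small commutators let compression nearly commute with polynomial evaluation, while $\|P f([a]_n) P\| \to \|f([a]_n)\|$ as $P \to I$. Combining this with the norm convergence in (iii)(b) and making a diagonal choice over $n$ and $P$ yields matrices witnessing (ii). Conversely, for (ii) $\Rightarrow$ (iii) I would realize the matrix models on the fixed space $\mathcal{H}$: decompose $\mathcal{H}$ to house the finite-dimensional blocks and define $[a_i]_n$ block-diagonally (so each family is automatically quasidiagonal), with the blocks reproducing the approximate polynomial norms to secure (b), and with a Voiculescu-type interleaving against $\pi(a_i)$ arranged to force $\ast$-SOT convergence in (c).

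The main obstacles are the two places where naive arguments fail. In (i) $\Rightarrow$ (ii) the simultaneity over the finite family $f_1, \dots, f_J$ is not automatic from a single lift and must be manufactured by the direct sum above. In (ii) $\Rightarrow$ (iii) the real difficulty is securing norm convergence (b) and $\ast$-SOT convergence (c) \emph{together}: strong convergence alone controls neither polynomial norms nor products, so the block sizes and the embedded copy of $\pi$ must be balanced carefully so that the two modes of convergence coexist. Everything else is routine bookkeeping with the quotient $\limsup$-norm.
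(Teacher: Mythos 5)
First, note that the paper itself does not prove Proposition~1: it explicitly defers to Hadwin--Shen \cite{4} ``and the references therein,'' so the comparison is against that standard proof. Against that benchmark, three of your four implications are essentially the standard arguments and are sound: the hub-through-(ii) architecture, the quotient-$\limsup$ construction for (ii)~$\Rightarrow$~(i) (with the minor bookkeeping that well-definedness of $\Phi$ on the polynomial $\ast$-algebra follows from isometry on differences, and boundedness of the generator sequences from including the monomials $X_i$ among the test polynomials), the direct-sum trick for (i)~$\Rightarrow$~(ii) --- which is exactly how the simultaneity problem is handled in \cite{4} --- and the compression argument for (iii)~$\Rightarrow$~(ii), where the asymptotic commutation supplied by quasidiagonality in (iii)(a) is precisely what legitimizes $\|f(PBP)\|\approx\|Pf(B)P\|$.

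The genuine gap is in (ii)~$\Rightarrow$~(iii), and your one-sentence plan --- block-diagonal matrix models ``with a Voiculescu-type interleaving against $\pi(a_i)$'' --- conceals rather than resolves it. If the interleaving means direct-summing genuine chunks of $\pi(a_i)$ into the approximants, then (a) fails: $\pi(\mathcal{A})$ need not be a quasidiagonal set (indeed for $\mathcal{A}=C^\ast_r(\mathbb{F}_2)$, the case this paper cares about, it never is), so a summand carrying an honest copy of $\pi$ destroys quasidiagonality of the family. If instead you use finite-rank compressions $R_n\pi(a_i)R_n$ as the initial blocks, then (b) fails: compressions inflate polynomial norms (e.g.\ one can have $T^2=0$ but $(RTR)^2\neq 0$ with norm bounded away from zero) unless the $R_n$ asymptotically commute with $\pi(a_i)$ --- which is exactly quasidiagonality of $\pi(\mathcal{A})$, unavailable here. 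The correct mechanism, which is the actual content of the Hadwin--Shen proof, is the extension-theoretic form of Voiculescu's theorem: the block-diagonal models define an isometric $\ast$-monomorphism $\rho:\mathcal{A}\to\mathcal{B}(\mathcal{H}_0)/\mathcal{K}(\mathcal{H}_0)$ (the Calkin norm of a block-diagonal operator is the $\limsup$ of the block norms), and since the trivial extension $\dot{\pi}=q\circ\pi$ is absorbed, $\rho\oplus\dot{\pi}\sim\rho$, producing unitaries $W$ with $W\bigl(\Lambda(x)\oplus\pi(x)\bigr)W^{\ast}-\Lambda(x)$ compact for the block-diagonal lifts $\Lambda$. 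It is these \emph{compact corrections} that reconcile $\ast$-SOT convergence to $\pi(a_i)$ with global block-diagonality of each approximating family; cutting along the block structure then yields (a), (b), (c) simultaneously. Without this absorption step, the two modes of convergence you correctly identify as the crux cannot be made to coexist, so as written the proposal does not close the hard implication.
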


Let $G$ be a discrete countable amenable residually finite group, equipped with F\o lner sets $F_n$, finite sets $K_n$ and finite index normal subgroups $L_n$ such that $F_n\subset K_n$ and $G=K_nL_n$ is a tiling of $G$ for every $n\ge 1$. Consider the family $\{\xi_{yL_n} : y\in K_n\} \subset \ell^2(G)$, with $\displaystyle \xi_{yL_n}=\sum_{x\in yL_n} \phi_n(x) \delta_x $ and \[\phi_n(x) = \sqrt{\frac{|K_n\cap F_nx|}{|F_n|}}.\]
The following two lemmas can be found in \cite{5}. The second is a consequence of the first.
\begin{lemma} Assume $G$ and $\xi_{yL_n}$ are as above. The following are true:
\begin{enumerate} 
\item [\emph{(i)}]For every $n\ge 1$, $\{\xi_{yL_n} : y\in K_n\}$ is an orthonormal family of vectors.  
\item [\emph{(ii)}]For any $s\in G$, \[ \lambda(s) \xi_{yL_n} = \xi_{syL_n} + \sum_{x\in yL_n} \left(\phi_n (x) - \phi_n(sx) \right)\delta _{sx} \mbox{ , with } \sum_{x\in yL_n} \left|\phi_n (x) - \phi_n(sx) \right|^2 \le \frac{|F_n\triangle F_ns|}{|F_n|}.\]
\end{enumerate}
\end{lemma}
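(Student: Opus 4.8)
The plan is to derive both parts from the tiling hypothesis $G=K_nL_n$, which says exactly that $K_n$ is a transversal for the finite-index normal subgroup $L_n$, so the cosets $\{yL_n : y\in K_n\}$ partition $G$. For part (i), orthogonality is immediate: for distinct $y,y'\in K_n$ the cosets $yL_n$ and $y'L_n$ are disjoint, so $\xi_{yL_n}$ and $\xi_{y'L_n}$ have disjoint supports. Normalization is the substantive point. Since $\|\xi_{yL_n}\|^2 = |F_n|^{-1}\sum_{x\in yL_n}|K_n\cap F_nx|$, I would show $\sum_{x\in yL_n}|K_n\cap F_nx| = |F_n|$ for every $y\in K_n$ by counting pairs $(k,x)$ with $k\in K_n$, $x\in yL_n$, and $kx^{-1}\in F_n$. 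Writing $f=kx^{-1}\in F_n\subset K_n$, the requirement $fx\in K_n$ together with $x\in yL_n$ forces $fx$ to be the unique representative of the coset $fyL_n$ lying in $K_n$, so each $f\in F_n$ contributes exactly one admissible $x$. This gives the count $|F_n|$, hence $\|\xi_{yL_n}\|=1$.

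For the identity in part (ii) I would apply $\lambda(s)$, which sends $\delta_x\mapsto\delta_{sx}$, to the definition of $\xi_{yL_n}$, obtaining $\lambda(s)\xi_{yL_n}=\sum_{x\in yL_n}\phi_n(x)\delta_{sx}$. Reindexing by $x'=sx$, so that $x'$ runs over the coset $syL_n$, and comparing term by term with $\xi_{syL_n}=\sum_{x'\in syL_n}\phi_n(x')\delta_{x'}$ produces precisely the stated correction term $\sum_{x\in yL_n}\bigl(\phi_n(x)-\phi_n(sx)\bigr)\delta_{sx}$.

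The estimate is where the real work lies. I would first invoke the elementary inequality $(\sqrt{a}-\sqrt{b})^2\le |a-b|$ for $a,b\ge 0$, applied with $a=|K_n\cap F_nx|$ and $b=|K_n\cap F_nsx|$, to bound $\sum_{x\in yL_n}|\phi_n(x)-\phi_n(sx)|^2$ by $|F_n|^{-1}\sum_{x\in yL_n}\bigl| |K_n\cap F_nx|-|K_n\cap F_nsx| \bigr|$. Next, writing $F_nsx=(F_ns)x$ and using $\bigl| |K\cap A|-|K\cap B| \bigr|\le |K\cap(A\triangle B)|$ with $A\triangle B=(F_n\triangle F_ns)x$, it remains to prove $\sum_{x\in yL_n}|K_n\cap(F_n\triangle F_ns)x|=|F_n\triangle F_ns|$. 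This is the same transversal-counting argument as in part (i): for each $e\in F_n\triangle F_ns$ there is exactly one $k\in K_n$, namely the representative of $eyL_n$, for which $x=e^{-1}k$ lies in $yL_n$, so the double count equals $|F_n\triangle F_ns|$. Chaining the three bounds yields the claimed inequality.

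The main obstacle is the combinatorial bookkeeping in these two counting identities, where the tiling structure must be used carefully to justify the ``exactly one representative per group element'' statements; once those are established, the analytic inequalities $(\sqrt a-\sqrt b)^2\le|a-b|$ and $\bigl||K\cap A|-|K\cap B|\bigr|\le|K\cap(A\triangle B)|$ are routine.
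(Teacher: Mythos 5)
Your proof is correct and takes essentially the same route as the source: the paper itself defers this lemma to \cite{5}, and the argument there rests on exactly the two ingredients you identify, namely the transversal property of the tiling (each left coset $gL_n$ meets $K_n$ in precisely one point, by uniqueness of the decomposition $G=K_nL_n$), which yields the counting identities $\sum_{x\in yL_n}|K_n\cap F_nx|=|F_n|$ and $\sum_{x\in yL_n}\bigl|K_n\cap(F_n\triangle F_ns)x\bigr|=|F_n\triangle F_ns|$, together with the elementary bounds $(\sqrt{a}-\sqrt{b})^2\le|a-b|$ and $\bigl||K\cap A|-|K\cap B|\bigr|\le|K\cap(A\triangle B)|$. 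Your counting arguments and the reindexing $x'=sx$ in part (ii) are all sound, so there is nothing to correct.
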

\begin{lemma} Assume $G$ and $\xi_{yL_n}$ are as above. If $P_{yL_n} \xi= \langle \xi ,\xi_{yL_n}\rangle\xi_{yL_n}$ and $P_n=\displaystyle \sum_{y\in K_n}P_{yL_n}$, then
\begin{enumerate}
\item [\emph{(i)}]For every $n\ge 1$, $P_n$ is a self-adjoint projection in $\mathcal{B}(\ell^2(G))$ and rank $P_n =|K_n|$.
\item [\emph{(ii)}]$P_n\to I$ in SOT as $n\to \infty$; and for any $s\in G$, $\displaystyle \|P_n\lambda (s) - \lambda (s)P_n\|^2 \le 4\frac{|F_n\triangle F_ns|}{|F_n|}\to 0$ as $n\to \infty$.
\end{enumerate}
\end{lemma}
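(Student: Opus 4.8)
The plan is to realize $P_n$ as the orthogonal projection onto the subspace $V_n=\operatorname{span}\{\xi_{yL_n}:y\in K_n\}$ and to read off all three assertions from the preceding lemma. For (i), part (i) of that lemma says $\{\xi_{yL_n}:y\in K_n\}$ is orthonormal, so each $P_{yL_n}$ is the rank-one projection onto $\mathbb{C}\,\xi_{yL_n}$ and, by orthonormality, these are mutually orthogonal; hence $P_n=\sum_{y\in K_n}P_{yL_n}$ is a self-adjoint projection with range $V_n$. Since $G=K_nL_n$ is a tiling, distinct $y\in K_n$ label distinct cosets $yL_n$, so $V_n$ is spanned by $|K_n|$ orthonormal vectors and rank $P_n=\dim V_n=|K_n|$.

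For the convergence $P_n\to I$ in SOT I would test on the total set $\{\delta_g:g\in G\}$, using the uniform bound $\|P_n\|\le 1$. Writing $y_0$ for the unique element of $K_n$ with $g\in y_0L_n$ (tiling again), one computes $\langle \delta_g,\xi_{yL_n}\rangle=\phi_n(g)\,\mathbf 1_{\{g\in yL_n\}}$, whence $P_n\delta_g=\phi_n(g)\,\xi_{y_0L_n}$ and $\|P_n\delta_g\|^2=\phi_n(g)^2=|K_n\cap F_ng|/|F_n|$. Because $P_n$ is a projection, $\|\delta_g-P_n\delta_g\|^2=1-\phi_n(g)^2$, so it remains to see $\phi_n(g)^2\to 1$. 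This follows from $F_n\subseteq K_n$: indeed $F_ng\setminus K_n\subseteq F_ng\setminus F_n$, whose size relative to $|F_n|$ is dominated by $|F_n\triangle F_ng|/|F_n|\to 0$, giving $|K_n\cap F_ng|/|F_n|\to 1$.

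The commutator estimate is the heart of the matter, and here I would use part (ii) of the preceding lemma. Split $P_n\lambda(s)-\lambda(s)P_n=P_n\lambda(s)(I-P_n)-(I-P_n)\lambda(s)P_n$. The structural observation is that, since $L_n$ is normal, left translation by $s$ permutes the cosets $\{yL_n:y\in K_n\}$; letting $\sigma_s$ denote the induced permutation of the transversal $K_n$, the translation formula becomes $\lambda(s)\xi_{yL_n}=\xi_{\sigma_s(y)L_n}+\eta_{y,n}$, where $\xi_{\sigma_s(y)L_n}\in V_n$ and $\|\eta_{y,n}\|^2\le |F_n\triangle F_ns|/|F_n|$. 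Thus for $\xi=\sum_{y}c_y\xi_{yL_n}\in V_n$ the leading terms stay in $V_n$ and $(I-P_n)\lambda(s)\xi=(I-P_n)\sum_y c_y\eta_{y,n}$; since the $\eta_{y,n}$ are supported on the pairwise disjoint cosets $syL_n$, the Pythagorean theorem yields $\|(I-P_n)\lambda(s)P_n\|^2\le |F_n\triangle F_ns|/|F_n|$. Applying this to $s^{-1}$ and passing to adjoints controls $\|P_n\lambda(s)(I-P_n)\|$ by the same quantity, since $|F_n\triangle F_ns^{-1}|=|F_ns\triangle F_n|=|F_n\triangle F_ns|$. Combining the two pieces through the triangle inequality then gives $\|P_n\lambda(s)-\lambda(s)P_n\|^2\le 4|F_n\triangle F_ns|/|F_n|$, which tends to $0$ by the F\o lner condition.

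The only genuine obstacle is this commutator bound, and within it two points must be secured: first, that $\lambda(s)$ merely permutes the spanning vectors $\xi_{yL_n}$ up to the small errors $\eta_{y,n}$ --- precisely where the normality of $L_n$ and the tiling hypothesis enter --- and second, that these error vectors have pairwise disjoint supports, which is what lets their norms be added orthogonally rather than crudely through the triangle inequality.
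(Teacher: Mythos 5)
Your proof is correct and takes essentially the approach the paper intends: the paper states this lemma without proof as a consequence of the preceding lemma (deferring to \cite{5}), and your argument---orthonormality giving (i), testing $P_n\delta_g$ with $\|P_n\delta_g\|^2=\phi_n(g)^2\to 1$ via $F_n\subseteq K_n$ and the F\o lner condition for SOT convergence, and the split $P_n\lambda(s)-\lambda(s)P_n=P_n\lambda(s)(I-P_n)-(I-P_n)\lambda(s)P_n$ with the pairwise orthogonally supported errors $\eta_{y,n}$ yielding $\bigl\|(I-P_n)\lambda(s)P_n\bigr\|^2\le |F_n\triangle F_ns|/|F_n|$ and its adjoint counterpart via $|F_n\triangle F_ns^{-1}|=|F_n\triangle F_ns|$---is exactly the standard derivation from Lemma~2 that the paper invokes. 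One cosmetic remark: left translation by $s$ permutes the left cosets $yL_n$ automatically, so normality of $L_n$ is not actually needed at that step (it is used elsewhere in the paper, e.g.\ in Claim~3, to place $l=syz^{-1}$ in $L_n\cap F_nK_nK_n^{-1}$).
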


\section*{Main Result}
Let $\mathcal{A} = \langle a_1,\dotsc , a_m\rangle $ be a unital finitely generated MF $C^\ast$-algebra. We first establish three claims related to our main result.

For $y\in K_n$, $s\in S=\{s_1, s_2, \dotsc , s_k\}\cup\{ e\}\subset G$ and $1\le i\le m$, consider the elements $ \alpha(y^{-1}s^{-1})a_i \in \mathcal{A}$, the quasidiagonal set $\left\{ \left[ \alpha(y^{-1}s^{-1})a_i\right]_n : 1\le i\le m, s\in S, y\in K_n\right\}\subset \mathcal{B}(\mathcal{H})$ obtained from part~(a) of Proposition~1(iii), and a sequence of finite rank projections $Q_n$ in $\mathcal{B}(\mathcal{H})$ such that $Q_n\to I$ in SOT as $n\to \infty$ and $Q_n$ asymptotically commutes with all elements in the above-mentioned set. For every $1\le i\le m$, positive integer $n$ and $s\in S$, define \[A_i^{(s)} = \sum_{y\in K_n} Q_n \left[ \alpha(y^{-1}s^{-1})a_i\right]_n Q_n \otimes P_{yL_n} \mbox{ , } A_i = A_i^{(e)} \mbox{ , and } U_s = Q_n \otimes P_n\lambda (s)P_n.\]

\begin{claim} For any $\epsilon>0$ and any finite subset $\{ q_1, \dotsc , q_J\}$ of $\mathbb{C} (X_1, \dotsc , X_{(k+1)m})$, there is a positive integer $n$ such that, for $N=$ rank $(Q_n\otimes P_n)$,
\[\max_{1\le j\le J} \left| \left\|q_j \left(A_1, \dotsc , A_1^{(s_k)}, \dotsc , A_m,\dotsc , A_m^{(s_k)}\right)\right\|_{\mathcal{M}_N(\mathbb{C})} \negthickspace\negthickspace - \left\|q_j \left( a_1,\dotsc ,\alpha(s_k^{-1})a_1, \dotsc , a_m, \dotsc , \alpha(s_k^{-1})a_m\right)\right\|_{\mathcal{A}} \right| <\epsilon.\]
\end{claim}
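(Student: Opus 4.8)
The plan is to exploit the block structure of the operators $A_i^{(s)}$ together with the isometric invariance of the target norms under the automorphisms $\alpha(y^{-1})$. Since the vectors $\{\xi_{yL_n}:y\in K_n\}$ are orthonormal (Lemma 2(i)), the rank-one projections $P_{yL_n}$ are mutually orthogonal, so each $A_i^{(s)}=\sum_{y\in K_n}Q_n[\alpha(y^{-1}s^{-1})a_i]_nQ_n\otimes P_{yL_n}$ is block diagonal with respect to $Q_n\mathcal{H}\otimes P_n\ell^2(G)=\bigoplus_{y\in K_n}(Q_n\mathcal{H}\otimes\mathbb{C}\xi_{yL_n})$. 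Because all the arguments share this decomposition and $P_{yL_n}P_{y'L_n}=\delta_{y,y'}P_{yL_n}$, any $\ast$-polynomial $q_j$ evaluated at the $A_i^{(s)}$ is again block diagonal, with $y$-block equal to $q_j$ evaluated at the compressed operators $Q_n[\alpha(y^{-1}s^{-1})a_i]_nQ_n$, whence
\[
\left\|q_j\bigl(\{A_i^{(s)}\}\bigr)\right\|_{\mathcal{M}_N}=\max_{y\in K_n}\left\|q_j\bigl(\{Q_n[\alpha(y^{-1}s^{-1})a_i]_nQ_n\}\bigr)\right\|.
\]
The crucial simplification is that the target norm does not depend on $y$: since $\alpha(y^{-1})$ is an isometric $\ast$-automorphism it commutes with polynomial evaluation, so $\|q_j(\{\alpha(y^{-1}s^{-1})a_i\})\|_{\mathcal{A}}=\|\alpha(y^{-1})q_j(\{\alpha(s^{-1})a_i\})\|_{\mathcal{A}}=\|q_j(\{\alpha(s^{-1})a_i\})\|_{\mathcal{A}}$ for every $y$. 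It therefore suffices to show that the maximum over $y$ of the block norms converges to this single quantity.

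Next I would establish a compression estimate. Writing $B_i^{(s)}(y)=[\alpha(y^{-1}s^{-1})a_i]_n$, the identity $Q_nBQ_nB'Q_n=Q_nBB'Q_n+Q_nB[Q_n,B']Q_n$ telescopes to give
\[
q_j\bigl(\{Q_nB_i^{(s)}(y)Q_n\}\bigr)=Q_n\,q_j\bigl(\{B_i^{(s)}(y)\}\bigr)\,Q_n+E_n(y),
\]
where $\|E_n(y)\|$ is bounded by a constant depending only on $q_j$, its degree, and the uniformly bounded norms $\|B_i^{(s)}(y)\|$, times $\max_{i,s,y}\|[Q_n,B_i^{(s)}(y)]\|$. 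By the defining property of $Q_n$ — that it asymptotically commutes with every element of the stage-$n$ set $\{[\alpha(y^{-1}s^{-1})a_i]_n:y\in K_n,\ s\in S,\ 1\le i\le m\}$ — this commutator maximum tends to $0$, so $\sup_y\|E_n(y)\|\to 0$.

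For the lower bound I would keep only the block $y=e$ (note $e\in K_n$). Since $[\alpha(s^{-1})a_i]_n\to\pi(\alpha(s^{-1})a_i)$ in $\ast$-SOT by part (c) of Proposition 1(iii) and the norms are uniformly bounded, $q_j(\{[\alpha(s^{-1})a_i]_n\})\to\pi(q_j(\{\alpha(s^{-1})a_i\}))$ in SOT; combined with $Q_n\to I$ in SOT, the compression converges in SOT to $\pi(q_j(\{\alpha(s^{-1})a_i\}))$, and lower semicontinuity of the operator norm under SOT yields $\liminf_n\|q_j(\{A_i^{(s)}\})\|\ge\|q_j(\{\alpha(s^{-1})a_i\})\|_{\mathcal{A}}$. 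For the upper bound, $\|Q_n\,q_j(\{B_i^{(s)}(y)\})\,Q_n\|\le\|q_j(\{[\alpha(y^{-1}s^{-1})a_i]_n\})\|$, and part (b) identifies the limit of the right-hand side as $\|q_j(\{\alpha(y^{-1}s^{-1})a_i\})\|_{\mathcal{A}}$, which by the invariance above equals the target; with the uniform compression error this gives $\limsup_n\|q_j(\{A_i^{(s)}\})\|\le\|q_j(\{\alpha(s^{-1})a_i\})\|_{\mathcal{A}}$.

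I expect the main obstacle to be the uniformity of this upper bound over the growing index set $K_n$: part (b) supplies convergence of the approximation norms for each fixed element, but here the number of blocks $|K_n|$ grows with $n$, so pointwise convergence is not enough. This is exactly where the automorphism invariance pays off, collapsing the a priori distinct targets $\|q_j(\{\alpha(y^{-1}s^{-1})a_i\})\|_{\mathcal{A}}$ to one value; one still needs $\max_{y\in K_n}\big|\,\|q_j(\{[\alpha(y^{-1}s^{-1})a_i]_n\})\|-\|q_j(\{\alpha(y^{-1}s^{-1})a_i\})\|_{\mathcal{A}}\big|\to 0$, which I would secure by choosing the approximants of Proposition 1(iii) through the standard diagonal construction, arranging that at stage $n$ all finitely many elements $\{\alpha(y^{-1}s^{-1})a_i:y\in K_n,\ s\in S,\ i\}$ and all finitely many polynomials $q_1,\dots,q_J$ are approximated to within $1/n$. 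Since there are only finitely many $q_j$, combining the matching $\liminf$ and $\limsup$ bounds and taking $n$ large then produces the single integer $n$ for which the displayed maximum is below $\epsilon$.
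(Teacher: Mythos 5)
Your proof is correct and follows essentially the same route as the paper's: the block-diagonal identity $\left\|q_j\bigl(\{A_i^{(s)}\}\bigr)\right\| = \max_{y\in K_n}\left\|q_j\bigl(\{Q_n[\alpha(y^{-1}s^{-1})a_i]_nQ_n\}\bigr)\right\|$ from Lemma~2(i), removal of the $Q_n$-compressions via the asymptotic commutation of $Q_n$ with the stage-$n$ quasidiagonal set, Proposition~1(iii)(b) to pass to norms in $\mathcal{A}$, and invariance under the $\ast$-automorphism $\alpha(y^{-1})$ to collapse the $y$-dependent targets to a single value. Your added care --- the lower bound via part~(c) and SOT lower semicontinuity of the norm, and the diagonal construction securing uniformity over the growing index set $K_n$ --- supplies exactly the details the paper compresses into ``use the quasidiagonality \ldots with $n$ sufficiently large.''
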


\begin{proof} For every $1\le j\le J$, \[\left\|q_j \left(A_1, \dotsc , A_m^{(s_k)}\right)\right\|_{\mathcal{M}_N(\mathbb{C})}  = \max_{y\in K_n} \left\{\left\|q_j \left(Q_n\left[\alpha(y^{-1})a_1\right]_nQ_n, \dotsc , Q_n\left[\alpha(y^{-1}s_k^{-1})a_m\right]_nQ_n\right)\right\|_{\mathcal{M}_{N/|K_n|}(\mathbb{C})} \right\} \]by Lemma 2(i). Now use the quasidiagonality of $\left\{ \left[ \alpha(y^{-1}s^{-1})a_i\right]_n : 1\le i\le m, s\in S, y\in K_n\right\}$, with $n$ sufficiently large, to obtain, for every $1\le j\le J$, \[\left|\left\|q_j \left(Q_n\left[\alpha(y^{-1})a_1\right]_nQ_n, \dotsc , Q_n\left[\alpha(y^{-1}s_k^{-1})a_m\right]_nQ_n\right)\right\| _{\mathcal{M}_{N/|K_n|}(\mathbb{C})}\right. \mbox{\hspace{2.1 in} }\]
\[ \mbox{\hspace{2.3 in} } - \left. \left\|q_j \left(\left[\alpha(y^{-1})a_1\right]_n, \dotsc , \left[\alpha(y^{-1}s_k^{-1})a_m\right]_n\right)\right\|_{\mathcal{B}(\mathcal{H})} \right| < \frac{\epsilon}{2},\]and part~(b) of Proposition~1(iii) to get \[\left|\left\|q_j \left(\left[\alpha(y^{-1})a_1\right]_n, \dotsc , \left[\alpha(y^{-1}s_k^{-1})a_m\right]_n\right)\right\|_{\mathcal{B}(\mathcal{H})} - \left\|q_j \left(\alpha(y^{-1})a_1, \dotsc , \alpha(y^{-1}s_k^{-1})a_m\right)\right\|_{\mathcal{A}} \right| <\frac{\epsilon}{2}.\]The last norm is equal to $\displaystyle \left\|q_j \left( a_1,\dotsc , \alpha(s_k^{-1})a_m\right)\right\|_{\mathcal{A}}$ since $\alpha (y^{-1})$ is a $\ast$-automorphism.
\end{proof}

\begin{claim} For any $s_1, \dotsc , s_k \in G$, any $\epsilon>0$, and any finite subset $\{ p_1, \dotsc , p_J\}$ of $\mathbb{C} (X_1, \dotsc , X_k)$, there is a positive integer $n$ such that  for $N=$ rank $(Q_n\otimes P_n)$,
\[\max_{1\le j\le J} \left| \left\|p_j \left(U_{s_1}, \dotsc , U_{s_k}\right)\right\|_{\mathcal{M}_N(\mathbb{C})} - \left\|p_j \left(\lambda(s_1), \dotsc ,\lambda(s_k)\right)\right\|_{\mathcal{B}(\ell^2(G))} \right| <\epsilon.\]
\end{claim}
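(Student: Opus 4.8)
The plan is to first remove the redundant tensor factor $Q_n$. Every operator $U_s = Q_n \otimes P_n\lambda(s)P_n$, and hence every $p_j(U_{s_1}, \dotsc, U_{s_k})$, is to be regarded as an element of $\mathcal{M}_N(\mathbb{C})$, that is, as an operator on the $N$-dimensional space $(Q_n\otimes P_n)(\mathcal{H}\otimes\ell^2(G)) = Q_n\mathcal{H}\otimes P_n\ell^2(G)$. On this space $Q_n$ acts as the identity, so $U_s$ is identified with $I\otimes V_s^{(n)}$, where $V_s^{(n)}$ denotes the compression $(P_n\lambda(s)P_n)|_{P_n\ell^2(G)}$, viewed as an operator on the $|K_n|$-dimensional space $P_n\ell^2(G)$. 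Since $X\mapsto I\otimes X$ is a unital $\ast$-homomorphism that preserves the operator norm, we get $\|p_j(U_{s_1}, \dotsc, U_{s_k})\|_{\mathcal{M}_N(\mathbb{C})} = \|p_j(V_{s_1}^{(n)}, \dotsc, V_{s_k}^{(n)})\|$ for each $j$, so it suffices to prove that these compressed norms converge to $\|p_j(\lambda(s_1), \dotsc, \lambda(s_k))\|_{\mathcal{B}(\ell^2(G))}$.

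Next I would compare $p_j(V_{s_1}^{(n)}, \dotsc, V_{s_k}^{(n)})$ with the single compression $P_n\, p_j(\lambda(s_1), \dotsc, \lambda(s_k))\, P_n$. Expanding $p_j$ into monomials, a typical product in the compressed variables has the form $P_n\lambda(t_1)P_n\lambda(t_2)P_n\cdots P_n\lambda(t_r)P_n$ with each $t_i \in \{s_1^{\pm1},\dotsc,s_k^{\pm1}\}$, whereas the corresponding monomial of $P_n\, p_j(\lambda)\, P_n$ is $P_n\lambda(t_1)\lambda(t_2)\cdots\lambda(t_r)P_n$. I would remove the interior projections one at a time by writing $\lambda(t)P_n = P_n\lambda(t) + [\lambda(t), P_n]$ and sliding each surplus $P_n$ to the left until it is absorbed into an adjacent projection via $P_n^2 = P_n$; since each $\lambda(t)$ is unitary and $\|P_n\|\le 1$, every such step costs at most $\|[P_n, \lambda(t)]\| \le 2(|F_n\triangle F_n t|/|F_n|)^{1/2}$ by Lemma 3(ii). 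As the number of interior projections is bounded by the degree of $p_j$ and the set $S$ is finite, the total error is at most a constant depending only on $p_j$ times $\max_{t}\|[P_n, \lambda(t)]\|\to 0$. Hence $\bigl|\,\|p_j(V_{s_1}^{(n)}, \dotsc, V_{s_k}^{(n)})\| - \|P_n\, p_j(\lambda(s_1), \dotsc, \lambda(s_k))\, P_n\|\,\bigr| \to 0$.

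Finally I would establish that for any fixed $T\in\mathcal{B}(\ell^2(G))$ one has $\|P_n T P_n\|\to\|T\|$. The inequality $\|P_n T P_n\|\le\|T\|$ is automatic. For the reverse, given $\epsilon>0$ pick a unit vector $\xi$ with $\|T\xi\|>\|T\|-\epsilon$; since $P_n\to I$ in SOT (Lemma 3(ii)) and $T$ is bounded, $P_n T P_n\xi\to T\xi$ in norm while $\|P_n\xi\|\to 1$, so $\liminf_n\|P_n T P_n\|\ge\|T\xi\|>\|T\|-\epsilon$, and letting $\epsilon\to 0$ gives the claim. Applying this to the fixed operators $T = p_j(\lambda(s_1), \dotsc, \lambda(s_k))$ yields $\|P_n\, p_j(\lambda)\, P_n\|\to\|p_j(\lambda)\|$. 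Combining the three steps and choosing $n$ large enough to handle the finitely many polynomials $p_1, \dotsc, p_J$ simultaneously then completes the argument.

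The hard part will be the telescoping estimate of the second step: one must verify that the error accumulated from the non-commutation of $P_n$ with the various $\lambda(t)$ remains uniformly controlled across all monomials of all the $p_j$, which is precisely where the quantitative F\o lner bound $\|P_n\lambda(s) - \lambda(s)P_n\|^2 \le 4|F_n\triangle F_ns|/|F_n|$ of Lemma 3 is indispensable. By contrast, the compression-norm convergence of the third step is a soft SOT argument, and the removal of $Q_n$ in the first step is purely formal.
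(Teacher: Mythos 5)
Your argument is correct and is exactly the one the paper intends: its one-line proof cites Lemma~3(ii) and the definition of $Q_n$, and your three steps (stripping the trivial $Q_n$ tensor factor, telescoping the interior $P_n$'s using the commutator bound $\|[P_n,\lambda(s)]\|\le 2(|F_n\triangle F_ns|/|F_n|)^{1/2}$, and the soft fact that $\|P_nTP_n\|\to\|T\|$ from $P_n\to I$ in SOT) are precisely the standard Pimsner--Voiculescu-style expansion of that citation. No gaps; the adjoint variables are handled since $\|[P_n,\lambda(s)^{\ast}]\|=\|[P_n,\lambda(s)]\|$ and Lemma~3(ii) applies to all $s\in G$, including inverses of the $s_i$.
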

\begin{proof} It follows from Lemma 3(ii) and the definition of the projections $Q_n$. \end{proof}

Let $\epsilon >0$ and $s\in F_n \subset G$. Choose appropriately large positive integer $n$ so that $|F_n\triangle F_ns|<\epsilon^2|F_n|/4$. Assume that for every $1\le i\le m$, the action is almost periodic, in the sense that \[\max_{l\in L_n\cap F_nK_nK_n^{-1}} \|\alpha (l)a_i - a_i\| \to 0 \mbox { as } n\to \infty.\]

\begin{claim} For $A_i^{(s)}$, $A_i$, $U_s$ as above and sufficiently large positive integer $n$, $\displaystyle \left\|U_s^{\ast}A_iU_s - A_i^{(s)}\right\|_{\mathcal{M}_N(\mathbb{C})} < \epsilon$.
\end{claim}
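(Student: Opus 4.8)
The plan is to compute $U_s^{\ast}A_iU_s$ explicitly and compare it with $A_i^{(s)}$ block by block, exploiting that both operators live (up to a controlled perturbation) in the coset decomposition $\ell^2(G)=\bigoplus_{z\in K_n}\ell^2(zL_n)$. Writing $U_s=Q_n\otimes V_s$ with $V_s=P_n\lambda(s)P_n$ and using $Q_n^2=Q_n$, the first tensor leg collapses to give
\[U_s^{\ast}A_iU_s=\sum_{y\in K_n}Q_n\bigl[\alpha(y^{-1})a_i\bigr]_nQ_n\otimes V_s^{\ast}P_{yL_n}V_s.\]
Everything then reduces to (a) identifying $V_s^{\ast}P_{yL_n}V_s$ as a small perturbation of a single coset projection, and (b) matching the surviving first legs via almost periodicity.

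For (a) I would feed $\lambda(s^{-1})$ into Lemma~2(ii). Since $|F_n\triangle F_ns^{-1}|=|F_n\triangle F_ns|$, the chosen $n$ gives $V_s^{\ast}\xi_{yL_n}=P_n\lambda(s^{-1})\xi_{yL_n}=\xi_{zL_n}+\eta_y$, where $z\in K_n$ is the representative of the coset $s^{-1}yL_n$ and $\|\eta_y\|<\epsilon/2$. The decisive structural point is that $P_n$ respects the coset grading (each $P_{wL_n}$ lives inside $\ell^2(wL_n)$), so both $\xi_{zL_n}$ and $\eta_y=P_ne_{s^{-1},y}$ are supported in $\ell^2(zL_n)$. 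Hence $V_s^{\ast}P_{yL_n}V_s=P_{zL_n}+R_y$ with $R_y$ supported in $\ell^2(zL_n)$ and $\|R_y\|<\epsilon+\epsilon^2/4$. Reindexing by $z$ through the bijection $z\mapsto\sigma_s(z)$ of $K_n$, where $\sigma_s(z)$ is the representative of $szL_n$ (a bijection because left translation permutes the cosets), the relation $y=\sigma_s(z)$ turns the sum into a main part $\sum_z Q_n[\alpha(\sigma_s(z)^{-1})a_i]_nQ_n\otimes P_{zL_n}$ plus an error $\sum_z Q_n[\alpha(\sigma_s(z)^{-1})a_i]_nQ_n\otimes R_{\sigma_s(z)}$.

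The reason the error does not accumulate over the $|K_n|\to\infty$ summands is orthogonality: both $P_{zL_n}$ and $R_{\sigma_s(z)}$ are supported in the mutually orthogonal subspaces $\ell^2(zL_n)$, so the main and error parts are each block diagonal and their norms are \emph{maxima} over $z$, not sums. Thus the error contributes at most $\max_z\|[\alpha(\sigma_s(z)^{-1})a_i]_n\|\cdot\|R_{\sigma_s(z)}\|=O(\epsilon)$. For (b), comparing the main part with $A_i^{(s)}=\sum_z Q_n[\alpha(z^{-1}s^{-1})a_i]_nQ_n\otimes P_{zL_n}$ reduces, again block by block, to bounding $\max_{z\in K_n}\|[\alpha(\sigma_s(z)^{-1})a_i]_n-[\alpha(z^{-1}s^{-1})a_i]_n\|$. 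At the level of $\mathcal{A}$ one has the clean identity $\alpha(\sigma_s(z)^{-1})a_i-\alpha(z^{-1}s^{-1})a_i=\alpha(z^{-1}s^{-1})\bigl(\alpha(l_z)a_i-a_i\bigr)$ with $l_z=sz\,\sigma_s(z)^{-1}$; normality of $L_n$ gives $l_z\in L_n$, while $s\in F_n$ and $z,\sigma_s(z)\in K_n$ give $l_z\in F_nK_nK_n^{-1}$, so $l_z\in L_n\cap F_nK_nK_n^{-1}$ and almost periodicity forces $\max_z\|\alpha(l_z)a_i-a_i\|\to0$.

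The hard part is the final transfer of this $\mathcal{A}$-level estimate to the operator models $[\cdot]_n$, \emph{uniformly} over the index set $K_n$, whose size grows with $n$: part~(b) of Proposition~1(iii) controls $\|[c]_n\|$ for a single fixed $c$ as $n\to\infty$, whereas here both the elements $\alpha(\sigma_s(z)^{-1})a_i$ and the comparison map $z\mapsto\sigma_s(z)$ depend on $n$. I would handle this by taking the models $b\mapsto[b]_n$ to be linear and uniformly bounded (as may be arranged from the matrix model realizing the MF embedding), so that $\|[\alpha(\sigma_s(z)^{-1})a_i]_n-[\alpha(z^{-1}s^{-1})a_i]_n\|\le(1+o(1))\|\alpha(l_z)a_i-a_i\|$ with the $o(1)$ and the bound independent of $z$; the uniform estimate $\max_z\|\alpha(l_z)a_i-a_i\|\to0$ then closes the argument. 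Collecting the two block-diagonal contributions gives $\|U_s^{\ast}A_iU_s-A_i^{(s)}\|\le O(\epsilon)+o(1)$, which is below $\epsilon$ for $n$ large after absorbing the fixed constants (e.g. normalizing $\|a_i\|\le1$).
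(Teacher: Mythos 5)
Your block-operator computation is, in substance, the paper's own proof. The paper fixes a unit vector $\eta\otimes\xi_{yL_n}$ and estimates $\bigl\|\bigl(U_s^{\ast}A_iU_s-A_i^{(s)}\bigr)\eta\otimes\xi_{yL_n}\bigr\|$ using Lemma~2(ii), the factorization $sy=zl'=lz$ with $l=syz^{-1}\in L_n\cap F_nK_nK_n^{-1}$ (your $l_z=sz\,\sigma_s(z)^{-1}$, with the roles of $y$ and $z$ exchanged), almost periodicity, and Proposition~1(iii)(b); it leaves the passage from these per-coset estimates to the operator norm as an implicit ``without loss of generality''. You make that passage explicit, and correctly: since $\lambda(s^{-1})\xi_{yL_n}$ is supported in $zL_n$, the vector $P_n\lambda(s^{-1})\xi_{yL_n}$ is in fact a scalar multiple of $\xi_{zL_n}$, so both your main term and your error term are block diagonal for the decomposition into the subspaces $Q_n\mathcal{H}\otimes\mathbb{C}\xi_{zL_n}$, and norms are maxima rather than sums over the $|K_n|$ blocks. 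Your group-theoretic identity, the membership $l_z\in L_n\cap F_nK_nK_n^{-1}$ (normality of $L_n$ plus $s\in F_n$, $z,\sigma_s(z)\in K_n$), and the bijectivity of $\sigma_s$ all check out and coincide with the paper's mechanism.

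The one genuine problem is your final transfer step. A \emph{linear, uniformly bounded} family of models $b\mapsto[b]_n$ lifting the MF embedding is not something that ``may be arranged'': if such a lift could be taken unital completely positive, then asymptotic multiplicativity together with $\limsup_t\|\Phi_t(a)\|=\|a\|$ is precisely Voiculescu's characterization of quasidiagonality, so no such lift exists for the non-quasidiagonal MF algebras (e.g.\ $C^{\ast}_r(\mathbb{F}_2)$) that motivate the theorem; and the existence of a merely $C$-bounded linear lift of $\mathcal{A}\to\prod_t\mathcal{M}_{N_t}(\mathbb{C})/\sum_t\mathcal{M}_{N_t}(\mathbb{C})$ is not a standard fact and you offer no construction --- the parenthetical carries the whole weight of the step. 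Fortunately you do not need it. The uniformity over $K_n$ that worries you involves, at each stage $n$, only the \emph{finitely many} elements $\alpha(y^{-1}s^{-1})a_i$ with $y\in K_n$, $s\in S$, $1\le i\le m$: approximate each by a polynomial in the generators within $1/n$, define $[\alpha(y^{-1}s^{-1})a_i]_n$ to be that polynomial evaluated at the generator models (which also preserves the quasidiagonality in part~(a)), and use Proposition~1(ii) with a diagonal re-indexing so that at stage $n$ the tolerance $1/n$ holds simultaneously for the finitely many difference polynomials evaluated at the relevant pairs. This stage-wise finite requirement, not a global linear lifting, is how the paper's invocation of Proposition~1(iii)(b) --- with its built-in $\epsilon^2/4$ slack --- should be read (the same subtlety is present, silently, in the paper). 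With that replacement your argument is complete.
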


\begin{proof}
Without loss of generality, start with a unit vector $\eta\in Q_n\mathcal{H}$ and compute, for $y\in K_n$,
\[\left\|\left(U_s^{\ast}A_iU_s - A_i^{(s)}\right)\eta \otimes\xi_{yL_n} \right\|^2 = \left\|U_s^{\ast}A_i \eta \otimes P_n\left(\xi_{syL_n} + \sum_{x\in yL_n}(\phi_n(x)-\phi_n(sx))\delta_{sx}\right) - A_i^{(s)} \eta \otimes\xi_{yL_n}\right\|^2\]
\[ \le \left\|U_s^{\ast}A_i \eta \otimes \xi_{syL_n} - A_i^{(s)} \eta \otimes\xi_{yL_n}\right\|^2 + \frac{\epsilon^2}{4} \mbox{  , by Lemma 2(ii). }\]
Let $sy = zl' = lz$ with $z\in K_n$ and $l, l' \in L_n$. Then $l=syz^{-1}\in L_n \cap F_nK_nK_n^{-1}$ and $\xi_{syL_n}$ = $\xi_{zL_n}$, which gives
\[ \left\|U_s^{\ast}A_i \eta \otimes \xi_{zL_n} - A_i^{(s)} \eta \otimes\xi_{yL_n}\right\|^2 = \left\|U_s^{\ast} Q_n\left[ \alpha (z^{-1}) a_i \right]_n \eta \otimes \xi_{zL_n} - Q_n \left[ \alpha (y^{-1}s^{-1}) a_i\right]_n \eta \otimes\xi_{yL_n}\right\|^2 \]
\[ = \left\|Q_n\left[ \alpha (z^{-1}) a_i \right]_n \eta \otimes P_n\left( \xi_{yL_n} + \sum_{x\in yL_n}(\phi_n(sx)-\phi_n(x))\delta_{x}\right) - Q_n \left[ \alpha (y^{-1}s^{-1}) a_i\right]_n \eta \otimes\xi_{yL_n}\right\|^2 \] \[ \le \left\|Q_n\left( \left[ \alpha (z^{-1}) a_i \right]_n - \left[ \alpha (y^{-1}s^{-1}) a_i\right]_n \right) \eta \otimes\xi_{yL_n}\right\|^2+\frac{\epsilon^2}{4} \le \left\| \left[ \alpha (z^{-1}) a_i \right]_n - \left[ \alpha (y^{-1}s^{-1}) a_i\right]_n \right\|^2 + \frac{\epsilon^2}{4}.\]
By part~(b) of Proposition~1(iii), \[ \left\| \left[ \alpha (z^{-1}) a_i \right]_n - \left[ \alpha (y^{-1}s^{-1}) a_i\right]_n \right\|_{\mathcal{B}(\mathcal{H})}^2 < \left\| \alpha (z^{-1}) a_i - \alpha (y^{-1}s^{-1}) a_i \right\|_{\mathcal{A}}^2 + \frac{\epsilon^2}{4}.\]

Finally, $\displaystyle \left\| \alpha (z^{-1}) a_i - \alpha (y^{-1}s^{-1}) a_i \right\|_{\mathcal{A}}^2 = \left\| a_i - \alpha(l) a_i \right\|_{\mathcal{A}}^2 < \frac{\epsilon^2}{4}$ by the almost periodicity of the action. 

Overall, for sufficiently large $n$,  \[\displaystyle \left\|\left(U_s^{\ast}A_iU_s - A_i^{(s)}\right)\eta \otimes\xi _{yL_n}\right\|^2 < \epsilon^2.\]
\end{proof}

We are now ready to state the main result.
\begin{theorem} Let $\mathcal{A}=\langle a_1,\dotsc ,a_m\rangle$ be a unital finitely generated MF algebra and $G =\langle s_1,\dotsc ,s_k\rangle$ a discrete finitely generated amenable residually finite group with a sequence of F\o lner sets $F_n$ and tilings of the form $G=K_nL_n$. Assume $\alpha : G\to Aut (\mathcal{A})$ is a homomorphism such that for every $1\le i\le m$,
\[\max_{l\in L_n\cap F_nK_nK_n^{-1}} \|\alpha (l)a_i - a_i\| \to 0 \mbox{ as } n\to \infty.\] Then $\mathcal{A}\rtimes_{\alpha} G$ is also MF.
\end{theorem}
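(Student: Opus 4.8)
The plan is to verify the Blackadar--Kirchberg criterion in the form of Proposition~1(ii) for the crossed product $\mathcal A\rtimes_\alpha G$, taking as generators the family $a_1,\dotsc,a_m,\lambda(s_1),\dotsc,\lambda(s_k)$ and as their finite-dimensional models the operators $A_1,\dotsc,A_m,U_{s_1},\dotsc,U_{s_k}$ built above, all supported on the finite-dimensional space $Q_n\mathcal H\otimes P_n\ell^2(G)$. Concretely, I would fix $\epsilon>0$ and a finite set of noncommutative $\ast$-polynomials $f_1,\dotsc,f_J$ in $m+k$ variables and show that, for all sufficiently large $n$, $\max_j\bigl|\,\|f_j(A_1,\dotsc,U_{s_k})\|-\|f_j(a_1,\dotsc,\lambda(s_k))\|_{\mathcal A\rtimes_\alpha G}\,\bigr|<\epsilon$. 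Since $G$ is amenable, the target norm may be computed in the reduced crossed product, i.e.\ in the regular covariant representation $\Pi$ on $\mathcal H\otimes\ell^2(G)$; this identification of the full norm with the reduced norm is what makes the finite-dimensional compressions a legitimate yardstick. The three claims already isolate the ingredients: Claim~1 says the $\mathcal A$-direction is modelled correctly in norm, Claim~2 says the $C^\ast(G)$-direction is modelled correctly in norm, and Claim~3 says the covariance relation $\lambda(s)^{\ast}a_i\lambda(s)=\alpha(s^{-1})a_i$ is modelled up to $\epsilon$ by $U_s^{\ast}A_iU_s\approx A_i^{(s)}$.

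The mechanism I would use to combine them is compression. Writing $E_n=Q_n\otimes P_n$, one has $U_s=E_n(1\otimes\lambda(s))E_n$ exactly and $A_i=E_n\hat A_iE_n$ with $\hat A_i=\sum_{y\in K_n}[\alpha(y^{-1})a_i]_n\otimes P_{yL_n}$. The key point is that $E_n$ asymptotically commutes with each building block: with $1\otimes\lambda(s)$ because $Q_n\otimes[P_n,\lambda(s)]\to0$ by Lemma~3(ii), and with $\hat A_i$ because $[\,\hat A_i,E_n\,]=\sum_{y}[\,[\alpha(y^{-1})a_i]_n,Q_n\,]\otimes P_{yL_n}$, whose norm is $\max_{y\in K_n}\|[\,[\alpha(y^{-1})a_i]_n,Q_n\,]\|\to0$ by the quasidiagonality built into the choice of $Q_n$ relative to the (finite, stage-$n$) family from Proposition~1(iii). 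Asymptotic commutation makes the compression $T\mapsto E_nTE_n$ asymptotically multiplicative on the algebra generated by these blocks, so for each fixed $f_j$ one obtains $\|f_j(A,U)\|=\|E_n f_j(\hat A,1\otimes\lambda)E_n\|+o(1)$. It then remains to match $\|E_n f_j(\hat A,1\otimes\lambda)E_n\|$ with $\|\Pi(f_j(a,\lambda))\|$. For the lower bound I would use $E_n\to I$ in SOT together with the norm-correctness supplied by Claims~1 and~2 to recover $\|\Pi(f_j(a,\lambda))\|$ from below along a near-maximising vector. For the upper bound I would view $(\hat A_i,1\otimes\lambda(s))$ as an \emph{approximate} covariant representation --- asymptotically multiplicative and self-adjoint on $\mathcal A$ because the Blackadar--Kirchberg models satisfy $\|[b]_n[c]_n-[bc]_n\|\to0$, and asymptotically covariant by Claim~3 --- and argue that its polynomial norms are dominated by the universal crossed-product norm up to $o(1)$; compression only lowers the norm further.

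The main obstacle I expect is precisely this last matching step, and its difficulty is conceptual rather than computational: $\mathcal A\rtimes_\alpha G$ is in general \emph{not} quasidiagonal, so no single sequence of projections asymptotically commutes with the whole algebra, and one cannot simply compress $\Pi$ directly. The construction evades this by approximating the $\mathcal A$-direction through the quasidiagonal Blackadar--Kirchberg models $[\,\cdot\,]_n$ (for which $Q_n$ does asymptotically commute) while approximating the $G$-direction through the genuine F\o lner projections $P_n$, and then tensoring. Reconciling these two inequivalent approximation schemes --- so that asymptotic multiplicativity and the exact norm-recovery hold \emph{simultaneously}, with all error terms uniform over the growing index set $K_n$ --- is the technical heart of the argument. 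This is exactly where almost periodicity enters, forcing the coset-wise coarse model $A_i$ to agree asymptotically with the genuine compression $E_n\Pi(a_i)E_n$ (the same mechanism underlying Claim~3), and where amenability enters, pinning the target down to the reduced norm. Once the error bookkeeping is carried out for the finitely many $f_j$ and $n$ is taken large, Proposition~1(ii) yields that $\mathcal A\rtimes_\alpha G$ is MF.
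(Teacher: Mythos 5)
Your skeleton matches the paper's: the same models $A_i$, $U_s$ on $Q_n\mathcal{H}\otimes P_n\ell^2(G)$, the same division of labor among Claims 1--3, and amenability used to identify full and reduced norms. Your direction ``approximate covariant pair $\Rightarrow$ norms dominated by the universal crossed-product norm'' is in substance the paper's argument for $\|f_j(A,U)\|\le\|f_j(a,\lambda)\|$; the paper implements it by passing to $\mathcal{C}\subset\prod_t\mathcal{M}_{N_t}(\mathbb{C})/\sum_t\mathcal{M}_{N_t}(\mathbb{C})$, where Claims 1 and 2 make the maps $\rho_1,\rho_2$ honest embeddings, Claim 3 makes the covariance exact, and the universal property gives a contractive $\rho$ with $\limsup_t\|f_j(A,U)\|_{\mathcal{M}_{N_t}}\le\|f_j\|_{\mathcal{A}\rtimes_\alpha G}$. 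This quotient trick is precisely the mechanism you gesture at but do not supply; also, your justification via $\|[b]_n[c]_n-[bc]_n\|\to0$ is not available as stated (the bracket $[\,\cdot\,]_n$ is defined only on the chosen family, with no multiplicativity clause) --- what is available, and suffices, is the polynomial-norm convergence of Proposition~1(iii)(b), i.e.\ Claim~1. For the reverse inequality you genuinely diverge: the paper never compresses $\Pi$ or picks near-maximizing vectors; it rewrites $f_j=\sum_d p_j^{(d)}(\lambda)\,q_j^{(d)}(a,\dotsc,\alpha(s_k^{-1})a_m)$ by covariance, uses Claim~3 to push the $U$'s past the $A$'s up to a remainder $r_j$ with $\|r_j\|<\epsilon/3$, and compares term by term via Claims 1 and 2. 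Your Pimsner--Voiculescu-style route does work and is arguably softer, but its engine is Proposition~1(iii)(c), not ``Claims 1 and 2'': fix a unit vector $v$ with $\|\Pi(f_j)v\|>\|\Pi(f_j)\|-\delta$; since $[\,\cdot\,]_n\to\pi(\cdot)$ $\ast$-strongly, $E_n\to I$ in SOT, $\phi_n(x)\to1$ forces $\xi_{y_xL_n}\to\delta_x$, and for fixed $x$ one eventually has $x\in K_n$ (so the tile representative of $x$ is $x$ itself), one gets $f_j(A,U)v\to\Pi(f_j)v$ and hence $\liminf_n\|f_j(A,U)\|\ge\|f_j\|_{r}=\|f_j\|$.

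The one genuine slip is your resolution of what you correctly identify as the technical heart: the assertion that almost periodicity forces $A_i$ to agree asymptotically with $E_n\Pi(a_i)E_n$ cannot be meant in norm. The blocks $[\alpha(y^{-1})a_i]_n$ approximate $\pi(\alpha(y^{-1})a_i)$ only in $\ast$-SOT; were $\|A_i-E_n\Pi(a_i)E_n\|\to0$ true, your compression argument would run entirely inside $\Pi$ and would essentially prove $\mathcal{A}\rtimes_\alpha G$ quasidiagonal, which is false in the paper's own examples ($\mathcal{A}=C^\ast_r(\mathbb{F}_2)$) --- the whole point of the MF framework is that only norm \emph{values}, not operators, need to be matched. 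The agreement you need holds on fixed vectors, where (as above) it comes for free, with no almost periodicity at all. Almost periodicity is consumed exactly once, in Claim~3, to compare $[\alpha(z^{-1})a_i]_n$ with $[\alpha(y^{-1}s^{-1})a_i]_n$ when $sy=lz$ straddles two tiles, i.e.\ to make $(A_i,U_s)$ approximately covariant; that feeds the universal-norm direction (and, in the paper's version, the remainder $r_j$). Relocate the hypothesis there, replace norm-agreement by $\ast$-SOT agreement in your lower bound, and make the ``approximate representation dominates'' step precise (e.g.\ by the paper's $\prod/\sum$ device), and your outline becomes a complete proof whose second half is a legitimate alternative to the paper's term-by-term bookkeeping.
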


\begin{proof}
Following the proof in \cite{4}, we will show that $\mathcal{A}\rtimes_{\alpha} G$ is an MF algebra by using Proposition~1. More specifically, we will show that for any $\epsilon>0$  and any finite subset $\{ f_1, \dotsc ,f_J\}$ of $\mathbb{C} (X_1, \dotsc , X_{m+k})$, there is a positive integer $N$ and a family of matrices $\{ A_1, \dotsc , A_m, U_{s_1}, \dotsc , U_{s_k}\}$ in $\mathcal{M}_N (\mathbb{C})$, such that \[ \max_{1\le j\le J} \left| \left\|f_j \left(a_1, \dotsc ,a_m, \lambda(s_1), \dotsc ,\lambda(s_k)\right)\right\|_{\mathcal{A}\rtimes_{\alpha} G} - \left\|f_j\left(A_1, \dotsc , A_m, U_{s_1}, \dotsc , U_{s_k}\right)\right\|_{\mathcal{M}_N (\mathbb{C})}\right| < \epsilon.\]

Let $\{ f_1, \dotsc ,f_J\} \subset \mathbb{C} (X_1, \dotsc , X_{m+k})$, and $ A_1, \dotsc , A_m, U_{s_1}, \dotsc ,U_{s_k}, N$ as in Claims~1-3. We first prove that for every $1\le j\le J$ and sufficiently large $n$, \[\left\| f_j \left(a_1, \dotsc ,a_m, \lambda (s_1), \dotsc , \lambda (s_k)\right) \right\|_{\mathcal{A}\rtimes_{\alpha} G} \ge \left\| f_j \left(A_1, \dotsc ,A_m, U_{s_1}, \dotsc U_{s_k}\right) \right\|_{\mathcal{M}_N(\mathbb{C})}. \]
Consider an enumeration of all polynomials in $\mathbb{C} (X_1, \dotsc , X_{(k+1)m})$ (respectively, in $\mathbb{C}(X_1,\dotsc X_k)$) with rational coefficients. For each $t=1, 2, \dotsc$, we can find a positive integer $n_t$ and $N_t=$ rank $(Q_{n_t}\otimes P_{n_t})$, such that, by Claim~1 (respectively, Claim~2) and for all $1\le j\le t$,  \[ \left| \left\|q_j \left(A_1, \dotsc , A_1^{(s_k)}, \dotsc , A_m,\dotsc , A_m^{(s_k)}\right)\right\|_{\mathcal{M}_{N_t}(\mathbb{C})}  - \left\|q_j \left( a_1,\dotsc ,\alpha(s_k^{-1})a_1, \dotsc , a_m, \dotsc , \alpha(s_k^{-1})a_m\right)\right\|_{\mathcal{A}} \right| <\frac{1}{t}\]
(respectively, \[\left| \left\|p_j \left(U_{s_1}, \dotsc , U_{s_k}\right)\right\|_{\mathcal{M}_{N_t}(\mathbb{C})} - \left\|p_j \left(\lambda(s_1), \dotsc ,\lambda(s_k)\right)\right\|_{C_r^{\star}(G)} \right| <\frac{1}{t}).\]

Let, for every $1\le i\le m$ and $s\in S$, \[\mathfrak{A}_i^{(s)} = \left\{ A_i^{(s)} \right\}_{t=1}^{\infty} \in \prod_{t=1}^{\infty} \mathcal{M}_{N_t}(\mathbb{C}) / \sum_{t=1}^{\infty} \mathcal{M}_{N_t}(\mathbb{C}) \mbox{ , } \mathfrak{A}_i = \mathfrak{A}_i^{(e)} \mbox{ , and } \mathfrak{U}_s = \left\{ U_s \right\}_{t=1}^{\infty} \in \prod_{t=1}^{\infty} \mathcal{M}_{N_t}(\mathbb{C}) / \sum_{t=1}^{\infty} \mathcal{M}_{N_t}(\mathbb{C}),\] 
and $\mathcal{C}$ denote the $C^\ast$-algebra generated by $\left\{ \mathfrak{A}_1, \dotsc , \mathfrak{A}_1^{(s_k)}, \dotsc , \mathfrak{A}_m,\dotsc , \mathfrak{A}_m^{(s_k)}\right\}$. 
Consequently, there are embeddings $\rho_1 :\mathcal{A} \to \mathcal{C}$ and $\rho_2 : C_r^{\ast}(G) \to \mathcal{C}$, given by \[ \rho_1 \left(\alpha (s^{-1})a_i\right) = \mathfrak{A}_i^{(s)} \mbox {  and } \rho_2 \left(\lambda (s)\right) = \mathfrak{U}_s \]
with the property that $(\rho_1,\rho_2)$ is a covariant homomorphism (by Claim~3). Therefore, there exists a $\ast$-homomorphism $\rho :\mathcal {A}\rtimes_{\alpha} G \to \mathcal{C}$, with $\rho \left(\alpha (s^{-1})a_i\right) = \mathfrak{A}_i^{(s)} \mbox { , and } \rho \left(\lambda (s)\right) = \mathfrak{U}_s$. It follows that for all $1\le j\le J$,
\begin{eqnarray*} \left\| f_j \left(a_1, \dotsc ,a_m, \lambda(s_1), \dotsc , \lambda(s_k)\right) \right\|_{\mathcal{A}\rtimes_{\alpha} G} &\ge&  \left\| f_j \left(\mathfrak{A}_1, \dotsc , \mathfrak{A}_m, \mathfrak{U}_{s_1}, \dotsc , \mathfrak{U}_{s_k}\right) \right\|_{\mathcal{C}}\\
& =  &\limsup_{t\to \infty} \left\| f_j \left(A_1, \dotsc ,A_m, U_{s_1}, \dotsc , U_{s_k}\right) \right\|_{\mathcal{M}_{N_t}(\mathbb{C})} .\end{eqnarray*}
It remains to show that for every $1\le j\le J$ and sufficiently large $n$, \[\left\| f_j \left(a_1, \dotsc ,a_m, \lambda(s_1), \dotsc , \lambda(s_k)\right) \right\|_{\mathcal{A}\rtimes_{\alpha} G} \le \left\| f_j \left(A_1, \dotsc ,A_m, U_{s_1}, \dotsc , U_{s_k}\right) \right\|_{\mathcal{M}_N(\mathbb{C})} +\epsilon. \]

There exist a positive integer $D$ and families of monomials $p_j^{(d)} \in \mathbb{C}(X_1,\dotsc , X_k)$ and polynomials $q_j^{(d)} \in \mathbb{C}(X_1,\dotsc , X_{(k+1)m})$ for $1\le d\le D$ and $1\le j\le J$, such that \[f_j \negthickspace \left(a_1, \dotsc , a_m, \lambda(s_1), \dotsc , \lambda(s_k)\right) = \displaystyle \sum_{d=1}^{D} p_j^{(d)} \negthickspace\left(\lambda(s_1), \dotsc , \lambda(s_k)\right) q_j^{(d)} \negthickspace \left(a_1,\dotsc ,\alpha(s_k^{-1})a_1, \dotsc , a_m, \dotsc , \alpha(s_k^{-1})a_m\right)\]

by the covariance relation for crossed products. Similarly, for sufficiently large $n$, we can get \[ f_j \left(A_1, \dotsc ,A_m, U_{s_1}, \dotsc , U_{s_k}\right) = \sum_{d=1}^{D} p_j^{(d)} \left(U_{s_1}, \dotsc , U_{s_k}\right) q_j^{(d)} \left(A_1, \dotsc ,A_m^{(s_k)}\right) + r_j\left(A_1, \dotsc ,A_m^{(s_k)}, U_{s_1}, \dotsc , U_{s_k}\right) \] 
with \[\max_{1\le j\le J} \left\|r_j\left(A_1, \dotsc ,A_m^{(s_k)}, U_{s_1}, \dotsc , U_{s_k}\right) \right\|_{\mathcal{M}_N(\mathbb{C})}< \frac{\epsilon}{3}\] by Claim~3 and repeated use of the approximate covariance relation $A_iU_s = U_s A_i^{(s)} + r(A_i, A_i^{(s)}, U_s)$.

We then have $\displaystyle \left\| f_j \left(a_1, \dotsc ,a_m, \lambda(s_1), \dotsc , \lambda(s_k)\right) \right\|_{\mathcal{A}\rtimes_{\alpha} G} - \left\| f_j \left(A_1, \dotsc ,A_m, U_{s_1}, \dotsc , U_{s_k}\right) \right\|_{\mathcal{M}_N(\mathbb{C})}$
\[< \sum_{d=1}^{D} \left\| p_j^{(d)} \left(\lambda(s_1), \dotsc , \lambda(s_k)\right)\right\|_{C^{\ast}(G)} \left\|q_j^{(d)} \left(a_1,\dotsc , \alpha(s_k^{-1})a_m\right)\right\|_{\mathcal{A}}  \hspace{2 in}\]
\[  \hspace{2.2 in}-  \sum_{d=1}^{D} \left\| p_j^{(d)} \left(U_{s_1}, \dotsc , U_{s_k}\right)\right\|_{\mathcal{M}_N(\mathbb{C})} \left\| q_j^{(d)} \left(A_1, \dotsc ,A_m^{(s_k)}\right)\right\|_{\mathcal{M}_N(\mathbb{C})} +\frac{\epsilon}{3}\]
\[\le \sum_{d=1}^{D} \left\| p_j^{(d)} \left(\lambda(s_1), \dotsc , \lambda(s_k)\right)\right\|_{C^{\ast}(G)} \left( \left\|q_j^{(d)} \left(a_1, \dotsc , \alpha(s_k^{-1})a_m\right)\right\|_{\mathcal{A}} - \left\| q_j^{(d)} \left(A_1, \dotsc ,A_m^{(s_k)}\right)\right\|_{\mathcal{M}_N(\mathbb{C})} \right) \]
\[ + \sum_{d=1}^{D} \left( \left\| p_j^{(d)} \left(\lambda(s_1), \dotsc , \lambda(s_k)\right)\right\|_{C^{\ast}(G)} - \left\| p_j^{(d)} \left(U_{s_1}, \dotsc , U_{s_k}\right)\right\|_{\mathcal{M}_N(\mathbb{C})} \right) \left\| q_j^{(d)} \left(A_1, \dotsc ,A_m^{(s_k)}\right)\right\|_{\mathcal{M}_N(\mathbb{C})} +\frac{\epsilon}{3}.\]

Finally, note that  $\left\| q_j^{(d)} \left(A_1, \dotsc ,A_m^{(s_k)}\right)\right\|_{\mathcal{M}_N(\mathbb{C})} \le \left\| q_j^{(d)} \left(a_1, \dotsc ,\alpha(s_k^{-1})a_m\right)\right\|_{\mathcal{A}} +1$ for $n$ sufficiently large, so define
\[M= \displaystyle \max_{1\le j\le J} \left\{ \sum_{d=1}^{D}\left\| p_j^{(d)} \left(\lambda(s_1), \dotsc , \lambda(s_k)\right)\right\|_{C^{\ast}(G)} , \sum_{d=1}^{D}\left\| q_j^{(d)} \left(a_1, \dotsc ,\alpha(s_k^{-1})a_m\right)\right\|_{\mathcal{A}}+ D\right\}\]
and use Claims~1 and 2 with a larger $n$ if necessary, to obtain, for all $1\le d\le D$ and $1\le j\le J$,
\[ \left\|q_j^{(d)} \left(a_1, \dotsc , \alpha(s_k^{-1})a_m\right)\right\|_{\mathcal{A}} - \left\| q_j^{(d)} \left(A_1, \dotsc ,A_m^{(s_k)}\right)\right\|_{\mathcal{M}_N(\mathbb{C})} < \frac{\epsilon}{3M} \mbox { , and}\]
\[  \left\| p_j^{(d)} \left(\lambda(s_1), \dotsc , \lambda(s_k)\right)\right\|_{C^{\ast}(G)} - \left\| p_j^{(d)} \left(U_{s_1}, \dotsc , U_{s_k}\right)\right\|_{\mathcal{M}_N(\mathbb{C})} < \frac{\epsilon}{3M}. \]
\end{proof}

\begin{remark} Every discrete group is the inductive limit of its finitely generated subgroups. In particular, a discrete maximally almost periodic group is the inductive limit of its residually finite subgroups. Moreover, the inductive limit of MF algebras is an MF algebra. Therefore, Theorem~8 remains true if one assumes $G$ to be any discrete countable amenable residually finite group, and can be extended to all discrete countable amenable maximally almost periodic groups whose finitely generated subgroups satisfy the approximate periodicity condition. One could also assume that $\mathcal{A}$ is separable, rather than finitely generated, because of Theorem~6.
\end{remark}

\section*{Examples}
We may now use this result to construct more exotic examples of crossed product $C^\ast$-algebras whose BDF $Ext$ semigroup is not a group.
\begin{example}
Consider the integer Heisenberg group, which can be defined abstractly as
\[ H =\langle s,t | \left[ [s,t],s\right] , \left[ [s,t],t\right] \rangle.\]
or in a concrete way, as the subgroup of $SL_3(\mathbb{Z} )$ generated by
\[ s= \begin{pmatrix} 1 & 1 & 0\\ 0 & 1 & 0\\ 0 & 0 & 1\end{pmatrix} \mbox{  and } t = \begin{pmatrix} 1 & 0 & 0\\ 0 & 1 & 1\\ 0 & 0 & 1\end{pmatrix} \mbox{ , with } u= [s,t] = s^{-1}t^{-1}st =\begin{pmatrix} 1 & 0 & 1\\ 0 & 1 & 0\\ 0 & 0 & 1\end{pmatrix}. \]
Every element of $H$ can be uniquely written in the form $s^kt^lu^m$, for $k,l.m\in \mathbb{Z}$. The sets \[K_n = \left\{s^kt^lu^m : -\frac{n}{2}< k,l,m\le \frac{n}{2}\right\}\] have subsets $F_n =\left\{s^kt^lu^m : -\sqrt{\frac{n}{2}}< k,l\le \sqrt{\frac{n}{2}}, -\frac{n}{2}<m\le \frac{n}{2}\right\}$ that form a F\o lner sequence, and if we define $L_n = \langle s^{n}, t^{n}, u^{n}\rangle$ then $H=K_nL_n$ is a tiling for every $n\ge 1$. Indeed, $L_n$ is normal since \[(s^kt^lu^m)\left(s^{np}t^{nq}u^{nr}\right)(s^kt^lu^m)^{-1} = s^{np}t^{nq}u^{n(r+qk-pl)} \in L_n\]
and for any $k,l,m \in \mathbb{Z}$, one can find unique $-\frac{n}{2} <k',l',m'\le \frac{n}{2}$ and $p,q,r\in \mathbb{Z}$, such that 
\[ s^kt^lu^m = s^{k'+np}t^{l'+nq}u^{m'+ n(r-pl')} = (s^{k'}t^{l'}u^{m'})\left(s^{np}t^{nq}u^{nr}\right) \in K_nL_n. \]

Assume now that $\mathcal{A}$ is a unital finitely generated non-quasidiagonal MF algebra (e.g. $\mathcal{A} = C^{\ast}_r(\mathbb{F}_2)$) and let an action $\alpha : H \to Aut (\mathcal{A})$ be induced by
\[ \alpha (s) a = \alpha (t) a = e^{2\pi\theta i} a\]
where $a \in \mathcal{A}$ and $0\le \theta\le 1$. Consider a positive integer $n$ with the property that $n\theta$ approximates an integer. Then $\alpha$ is approximately periodic on $L_n\cap F_nK_nK_n^{-1}$. It follows that $(\mathcal{A}, H, \alpha)$ satisfies the conditions of Theorem~8, and thus the crossed product $\mathcal{A}\rtimes_{\alpha} H$ is a non-quasidiagonal MF algebra, therefore its $Ext$ semigroup fails to be a group.
\end{example}
\begin{example}
Consider the Lamplighter group, which can be defined abstractly as
\[ \Lambda = \langle s,t | s^2, [t^jst^{-j}, t^lst^{-l}]: j,l\in \mathbb{Z}\rangle.\]
or otherwise, as the semidirect product $ \left( \bigoplus_{\mathbb{Z}} \mathbb{Z}_2\right) \rtimes \mathbb{Z} $, where the action is by shifting the copies of $\mathbb{Z}_2$ along $\mathbb{Z}$. By denoting $t^jst^{-j} = s_j$, we may write each element of $\Lambda$ uniquely as $s_{j_1}s_{j_2}\dotsm s_{j_k}t^{j_0}$ with $j_1<j_2<\dotsm <j_k$ and $j_0$ in $\mathbb{Z}$. 
Let \[F_n=K_n = \left\{s_{j_1}s_{j_2}\dotsm s_{j_k}t^{j_0} \mbox{ :  }-\frac{n}{2}<j_0\le \frac{n}{2}\mbox{ , }-\frac{n}{2}<j_1<j_2 <\dotsm <j_k\le \frac{n}{2} \mbox{ , and } 0\le k<n\right\}.\]
The F\o lner condition follows immediatety from $F_ns =F_n$ and $|F_n\triangle F_nt|=2|F_n|/n$. Let $L_n$ be the subgroup of $\Lambda$ generated by $t^{n}$ and $s_{j}s_{j+n}$ for all $-\frac{n}{2}<j\le \frac{n}{2}$. Note that $L_n$ contains all elements $s_js_l$ with $l-j$ divisible by $n$, and in fact, $L_n$ is normal in $\Lambda$, since, for $s_{j_1}s_{j_2}\dotsm s_{j_k}t^{j_0} \in \Lambda$ and  $s_{l_1}s_{l_2}\dotsm s_{l_m}t^{l_0} \in L_n$, we have
\[ \left(s_{j_1}s_{j_2}\dotsm s_{j_k}t^{j_0}\right)\left(s_{l_1}s_{l_2}\dotsm s_{l_m}t^{l_0} \right)\left(s_{j_1}s_{j_2}\dotsm s_{j_k}t^{j_0}\right)^{-1} = \left(s_{j_1}s_{j_1+l_0}\right)\dotsm \left(s_{j_k}s_{j_k+l_0}\right)\left( s_{l_1+j_0}s_{l_2+j_0}\dotsm s_{l_m+j_0}t^{l_0}\right)\]
which is in $L_n$. Moreover, $\Lambda=K_nL_n$ is a tiling for every $n\ge 1$, since any $s_{j_1}s_{j_2}\dotsm s_{j_k}t^{j_0} \in \Lambda$ can be decomposed into 
\[\left(s_{r_1}s_{r_2}\dotsm s_{r_m}t^{r_0}\right) \left( s_{r_1 - r_0}\dotsm s_{r_m - r_0} s_{j_1-r_0}\dotsm s_{j_k-r_0}t^{j_0-r_0} \right) \in K_n L_n\] 

with $0\le m\le k< n$, $-\frac{n}{2} <r_0, r_1,\dotsc r_m\le \frac{n}{2}$, $j_0-r_0$ divisible by $n$, and the cardinality of the set $\left( c+n\mathbb{Z} \right) \cap \left\{r_1,\dotsc ,r_m, j_1,\dotsc ,j_k\right\}$ to be an even number (or zero) for each $c=1, \dotsc , n$. To verify the uniquess of such a decomposition, one can easily compute that $K_n^{-1}K_n \cap L_n = \{ e\}$ for all $n\ge 1$.
 
Having studied the group in detail, let us now consider $\mathcal{A}$ to be any unital finitely generated non-quasidiagonal MF algebra and let an action $\alpha : \Lambda \to Aut (\mathcal{A})$ be induced by
\[ \alpha (s) a = a^{\ast} \mbox{ , and } \alpha (t) a = e^{2\pi\theta i} a\]
where $a \in \mathcal{A}$ and $0\le \theta\le 1$. Again, for a positive integer $n$ with the property that $n\theta$ approximates an integer, $\alpha$ is approximately periodic on $L_n\cap K_nK_nK_n^{-1}$. It follows that $(\mathcal{A}, \Lambda, \alpha)$ satisfies the conditions of Theorem~8, and thus the crossed product $\mathcal{A}\rtimes_{\alpha} \Lambda$ is a non-quasidiagonal MF algebra, hence its $Ext$ semigroup is not a group.\end{example}

\end{document}